\documentclass[12pt]{article}
\usepackage[top=1in, bottom=1in, left=1.2in, right=1.2in]{geometry}
\usepackage{graphicx, amsfonts,amssymb,amsthm,amsbsy,latexsym,amscd,amsmath,dsfont,euscript,enumerate,manfnt, marvosym,verbatim, calc,mathrsfs,marvosym,textcomp, color,xcolor,tikz-cd,setspace,stmaryrd,upgreek,bm} % Required for inserting images

\usepackage{pgf,tikz,amsmath}
\usepackage{tikzit}
\tikzstyle{Doty}=[fill=black, draw=black, shape=circle, tikzit shape=circle]

\tikzstyle{new edge style 0}=[-, draw=red]

\usepackage{blkarray}
\usepackage{caption}
\usepackage{enumerate}
\usepackage[pagebackref,colorlinks=true,
linkcolor=blue,
urlcolor=red,colorlinks,
citecolor=green]{hyperref}
\usepackage{cleveref}
\usepackage{parskip}
\usepackage{setspace}
\makeatletter
\def\thm@space@setup{%
  \thm@preskip=10pt % space before theorem
  \thm@postskip=6pt % space after theorem (optional)
}
\makeatother

\usepackage{refcount}

\newtheorem*{theorem*}{Theorem}
\newtheorem{theorem}{Theorem}

\newtheorem{lemma}{Lemma}[section]

\crefname{lemma}{Lemma}{Lemmas}
\crefname{corollary}{Corollary}{Corollaries}

\Crefname{lemma}{Lemma}{Lemmas}
\Crefname{corollary}{Corollary}{Corollaries}

\crefalias{corollary}{corollary}

\newtheorem{claim}{Claim}[theorem]

\crefname{claim}{claim}{claims}
\Crefname{claim}{Claim}{Claims}

\crefname{subclaim}{subclaim}{subclaims}
\Crefname{subclaim}{Subclaim}{Subclaims}

\theoremstyle{definition}

\newtheorem*{question}{Question}
\newtheorem{remark}{Remark}

\newtheorem*{acknowledgement}{Acknowledgement}

\title{On Ramsey number of $K_{2,n}$ versus even cycles}
\author{Abisek Dewan\footnote{Department of Mathematics and Statistics, IISER Kolkata, India (ad22rs069@iiserkol.ac.in)}\and Sayan Gupta\footnote{School of Mathematical Sciences, NISER Bhubaneswar (An OCC of Homi Bhabha National Institute, Mumbai), India (sayan.gupta@niser.ac.in)}\and Rajiv Mishra\footnote{Department of Mathematics and Statistics, IISER Kolkata, India (rm20rs017@iiserkol.ac.in)}}
\date{ }

\begin{document}

\maketitle

\begin{abstract}
    For graphs $G$ and $H$, the Ramsey number $R(G,H)$ is the smallest integer $N$ such that every graph $\Gamma$ on $N$ vertices contains $G$ or its complement $\overline{\Gamma}$ contains $H$ as a subgraph. In graph Ramsey theory, the star-cycle Ramsey number is well-studied throughout the years. Whereas the Ramsey number of $K_{2,n}$ versus cycle is challenging to determine due to increased structural complexity. In this article, we have obtained an exact value of the Ramsey number $R(K_{2,n}, C_{m})$ for even $m\in [n, 2n-4008]$ and $n\geq 4516$. In particular, we show that $$R(K_{1,n}, C_{m})= R(K_{2,n}, C_{m})$$ for all even $m\in [n, 2n-4008]$ and $n\geq 4516$. This leads to an interesting question: For fixed $t$, does there exist $n_0(t)\in \mathbb{N}$ such that $R(K_{1,n}, C_m)=R(K_{t,n}, C_m)$ for all $n \geq n_0(t)$ and for a given range of even $m$?   
\end{abstract}

\noindent {\bf Key words:}Ramsey number; Pancyclic graph; Weakly pancyclic graph; Even cycles.

\noindent {\bf AMS Subject Classification:} 05D10, 05C55 (Primary); 05C35 (Secondary).

\section{Introduction}

In this article, we consider finite, simple and undirected graphs only.
We start by introducing some standard  notations and terminologies. Let $G$ be a graph with vertex set $V(G)$ and edge set $E(G)$. For a vertex $v \in V(G)$, the neighborhood of $v$ is the set of all vertices adjacent to $v$ and it is denoted by $N_G(v)$. The cardinality of this set is called the degree of a vertex $v$, and is denoted by $d_G(v)$.  The minimum and maximum degrees of $G$ are denoted by $\delta(G)$ and $\Delta(G)$, respectively. The subgraph of $G$ induced by a subset $X \subseteq V(G)$ is denoted as $G[X]$. The circumference $c(G)$ is the length of a longest cycle in $G$, whereas the girth $g(G)$ is the length of a shortest cycle in $G$.

A cycle containing all vertices of $G$ is called a Hamiltonian cycle, and a graph containing such a cycle is said to be Hamiltonian graph. A graph $G$ is said to be pancyclic if it contains cycles of all lengths from 3 to $|V(G)|$, and weakly pancyclic if it contains cycles of all lengths between its girth and circumference. Similarly, a bipartite graph is called bi-pancyclic if it contains cycles of all even lengths from $4$ to $|V(G)|$, and weakly bi-pancyclic if it contains cycles of all even lengths between its girth and circumference. A graph $G$ is said to be $k$-connected if it remains connected after the removal of any set of fewer than $k$ vertices. The maximum integer $k$ for which $G$ is $k$-connected is called the connectivity of $G$, denoted by $\kappa(G)$.

For any two graphs $G$ and $H$, the \emph{Ramsey number}, denoted by $R(G, H)$, is the minimum positive integer $N$ such that for any graph $\Gamma$ on $N$ vertices either $\Gamma \supseteq G$ or $\overline{\Gamma}\supseteq H$ as subgraphs. Cycles and stars have been well studied in graph Ramsey theory since the early 1970s. The following is the well-known result of the star-cycle Ramsey number by Lawrence \cite{SLLawrence1973}, a detailed proof of which can be found in \cite{parsons1978ramsey}.

\begin{theorem}[Lawrence \cite{SLLawrence1973}]
    \begin{align*}
R(K_{1,n},C_{m})&=\left\{\begin{array}{lll}
		2n+1& \textnormal{ if  }&  m \textnormal{ is odd and } m\leq 2n-1\\
        m &\textnormal{  if }& m\geq 2n.
    \end{array}\right.  
\end{align*}
\end{theorem}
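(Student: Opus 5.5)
We prove the two cases of Lawrence's theorem separately. In each case we give a lower-bound construction and then an upper-bound argument. Throughout, "$\Gamma$ contains no $K_{1,n}$" is the same as $\Delta(\Gamma)\le n-1$, which gives $\delta(\overline{\Gamma})\ge N-n$.

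\emph{Case $m\ge 2n$.} The lower bound is immediate: $K_{m-1}$ has empty complement, and it contains no $K_{1,n}$ as long as $m-1\le n$. When $m-1>n$ we need more care, because then $K_{m-1}$ does contain $K_{1,n}$. In that range we use the trivial bound instead: any graph $\Gamma$ on $m-1$ vertices has $\overline{\Gamma}$ on $m-1$ vertices, which cannot contain $C_m$. So $R(K_{1,n},C_m)\ge m$ always, because $\overline{\Gamma}$ needs $m$ vertices to host $C_m$.

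For the upper bound, take $N=m$. If $\Gamma\not\supseteq K_{1,n}$, then $\delta(\overline{\Gamma})\ge m-n\ge m/2$. Dirac's theorem then makes $\overline{\Gamma}$ Hamiltonian, so it contains $C_m$.

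\emph{Case $m$ odd, $m\le 2n-1$.} For the lower bound, take $\Gamma=2K_n$ on $2n$ vertices. Each component is a $K_n$, which has maximum degree $n-1$, so $\Gamma$ has no $K_{1,n}$. Its complement is $K_{n,n}$, which is bipartite and so has no odd cycle. Hence $R\ge 2n+1$.

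For the upper bound, take $N=2n+1$. If $\Gamma\not\supseteq K_{1,n}$, then $\delta(\overline{\Gamma})\ge n+1>N/2$.
\begin{itemize}
\item By Bondy's theorem, a Hamiltonian graph on $N$ vertices with at least $N^2/4$ edges is either pancyclic or equal to $K_{N/2,N/2}$.
\item $\overline{\Gamma}$ is Hamiltonian by Dirac's theorem.
\item It has at least $N(n+1)/2>N^2/4$ edges.
\item $N$ is odd, so $\overline{\Gamma}$ cannot be $K_{N/2,N/2}$.
\end{itemize}
Therefore $\overline{\Gamma}$ is pancyclic and contains $C_m$ for every $3\le m\le 2n+1$, in particular for odd $m\le 2n-1$.

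The main obstacle is the upper bound in the odd case. The natural tool is Bondy's pancyclicity theorem, which gives the result cleanly. If one prefers a self-contained argument, the fallback is to fix a Hamiltonian cycle $v_1\dots v_N$ of $\overline{\Gamma}$ and count chords. For a given length $m$, the degree bound $d(v_1)+d(v_N)\ge 2n+2$ forces a pair of chords $v_1v_{i+1}$ and $v_Nv_{i}$ of the right offset, which closes a cycle of length exactly $m$. This is a pigeonhole argument on indices modulo the cycle. Since the paper allows citing \cite{parsons1978ramsey}, we would invoke Bondy's theorem rather than redo this counting.
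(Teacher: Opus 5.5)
The paper does not prove this theorem. It quotes it from Lawrence and refers to Parsons for the proof, so there is no in-paper argument to compare against. Your proof is correct and is the standard one:
\begin{itemize}
\item for $m\ge 2n$, Dirac's theorem gives the upper bound;
\item for odd $m$, the graph $2K_n$ gives the lower bound, and pancyclicity of $\overline{\Gamma}$ gives the upper bound.
\end{itemize}
You could shorten the odd case by applying the minimum-degree form of Bondy's theorem from the paper's preliminaries directly to $\delta(\overline{\Gamma})\ge n+1>(2n+1)/2$. The exception $K_{r,r}$ is excluded because $2n+1$ is odd, so Dirac and the edge count are not needed.

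Two small repairs are needed.

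\textbf{Lower bound for $m\ge 2n$.} The witness $K_{m-1}$ is never usable when $n\ge 2$, since $m-1\ge 2n-1\ge n+1$ means it contains $K_{1,n}$. The phrase ``any graph $\Gamma$ on $m-1$ vertices'' is too loose, because a lower-bound witness must also be $K_{1,n}$-free. Take the edgeless graph on $m-1$ vertices instead. It has no $K_{1,n}$, and its complement $K_{m-1}$ is too small to contain $C_m$.

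\textbf{Fallback sketch.} The chord-counting argument would not work as written. The chords $v_1v_{i+1}$ and $v_Nv_i$ close a Hamiltonian cycle, not a $C_m$. A cycle of length $m$ needs a pair $v_1v_i$, $v_Nv_{i+m-3}$, and a degree count at the two ends of a single Hamiltonian path does not by itself force such a pair for general $m$; Bondy's argument sums over all positions on the cycle. Since you rely on Bondy's theorem anyway, this does not affect the proof.
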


Zhang, Broersma, and Chen \cite{zhang2016narrowing} narrowed the gap of star-cycle Ramsey number by finding the exact value of $R(K_{1,n}, C_{m})$ for even $m\in [3n/4+1, 2n]$. In 2023, Allen et al. \cite{allen2023ramsey} filled the remaining gap of the star-cycle Ramsey number by obtaining an exact formula of $R(K_{1,n}, C_{m})$ for large even $m$. 

\begin{theorem}[Zhang et al. \cite{zhang2016narrowing}]\label{thm:zhang}
    \begin{align*}
R(K_{1,n},C_{m})&=\left\{\begin{array}{lll}
		2n& \textnormal{ if  }&  m \textnormal{ is even and } n<m\leq 2n\\
        2m-1 &\textnormal{  if }& m \textnormal{ is even and } \frac{3n}{4}+1\leq m\leq n.
    \end{array}\right.  
\end{align*}
\end{theorem}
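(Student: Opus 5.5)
We prove the two ranges of Theorem~\ref{thm:zhang} separately. Each needs a lower-bound construction and an upper-bound argument. The upper-bound argument works on the complement: given a graph $\Gamma$ with no $K_{1,n}$, so $\Delta(\Gamma)\le n-1$, we must find $C_m$ in $G=\overline{\Gamma}$. This gives a minimum-degree condition on $G$, namely $\delta(G)\ge N-n$. We then combine this with circumference and weak-pancyclicity tools.

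\textbf{Lower bounds.} For $n<m\le 2n$ ($m$ even) take $\Gamma$ on $2n-1$ vertices; it is $(n-1)$-regular when $n$ is even, and nearly regular otherwise. Then $\overline{\Gamma}$ has $2n-1<m$ vertices when $m\ge 2n$. For $n<m<2n$ a count alone does not suffice, so we use $\Gamma=K_{n-1}\cup K_{n}$ minus a perfect matching on the $K_n$ part, or a similar near-regular graph. Its complement is close to $K_{n-1,n}$ plus a matching, and we arrange that its longest cycle has length $<m$ or that it misses $C_m$ by a parity argument.

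For $\frac{3n}{4}+1\le m\le n$ take $\Gamma=K_{m-1}\cup K_{m-1}$ on $2m-2$ vertices. We have $\Delta=m-2\le n-1$, and $\overline{\Gamma}=K_{m-1,m-1}$. This graph is bipartite with parts of size $m-1$, so its longest cycle has length $2m-2$. That alone does not exclude $C_m$, so instead take $\Gamma=K_{m-1}\cup K_{m-1}$ and note that $\overline{\Gamma}$ contains even cycles; we must then use $\Gamma = K_{m-1}\cup H$ with a suitable $H$. I would double-check the standard construction here, which is probably $\Gamma$ consisting of two disjoint cliques of sizes giving $\overline{\Gamma}$ unbalanced bipartite with smaller part $<m/2$. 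That construction has no $C_m$, since a cycle in a bipartite graph uses equally many vertices from each side.

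\textbf{Upper bounds.} Let $N=2n$ or $2m-1$, and let $G=\overline{\Gamma}$ with $\delta(G)\ge N-n$. In the range $n<m\le 2n$ we have $\delta(G)\ge n\ge N/2$, so $G$ is Hamiltonian by Dirac's theorem and pancyclic by Bondy's theorem unless $G=K_{n,n}$. In that exceptional case $K_{n,n}$ still contains every even cycle up to $2n$, so $C_m$ exists.

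In the range $m\le n$ we have $\delta(G)\ge 2m-1-n$, which may be below $N/2$. Here I would split according to the connectivity of $G$. If $G$ is $2$-connected, apply a circumference bound such as Dirac's $c(G)\ge 2\delta$ or Bondy--Jackson type results on long cycles, and then weak pancyclicity, using Brandt's results that non-bipartite graphs with many edges are weakly pancyclic with girth $3$ or $4$. This produces all cycle lengths up to $c(G)\ge m$. If $G$ is not $2$-connected, the degree condition forces each block to be large, and we apply the same argument inside one block. Alternatively, we use that $\Gamma$ restricted to a small side has bounded degree.

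\textbf{Main obstacle.} The main obstacle is the range $\frac{3n}{4}+1\le m\le n$, where $\delta(G)$ is only about $m-n/4\cdot$const. We must guarantee that the longest cycle reaches length $m$ and that the graph is weakly pancyclic rather than bipartite with the wrong part sizes. The hypothesis $m\ge 3n/4+1$ should enter exactly in this step, to ensure that the relevant component has circumference at least $m$.
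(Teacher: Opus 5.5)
The paper gives no proof of this statement; it is quoted from Zhang, Broersma and Chen, so your proposal has to stand on its own. Your upper bound for $n<m\le 2n$ is correct: $\delta(\overline{\Gamma})\ge n=N/2$, and Bondy's theorem, together with the fact that $K_{n,n}$ contains every even cycle up to $2n$, gives $C_m$. The lower bounds and the second upper bound have genuine gaps.

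\textbf{Lower bounds.} Both constructions fail for the same reason. You put cliques into $\Gamma$, which makes $\overline{\Gamma}$ (nearly) complete bipartite and therefore full of even cycles.

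For $n<m<2n$, the graph $\Gamma=K_{n-1}\cup(K_n-M)$ has $\overline{\Gamma}\supseteq K_{n-1,n}$. That graph contains $C_\ell$ for every even $\ell\le 2n-2$, in particular $C_m$, and no parity or circumference argument can rescue it. Also, an $(n-1)$-regular graph on $2n-1$ vertices cannot exist when $n$ is even, although for $m=2n$ any graph works anyway.

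For $\frac{3n}{4}+1\le m\le n$, an unbalanced $\Gamma=K_a\cup K_b$ with $a+b=2m-2$ and $a\le m/2-1$ forces $b\ge \frac{3m}{2}-1\ge \frac{9n}{8}+\frac12>n$. Hence $\Delta(\Gamma)\ge n$ and $\Gamma\supseteq K_{1,n}$.

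The roles must be reversed: make $\Gamma$ complete bipartite so that $\overline{\Gamma}$ is a union of cliques.
\begin{itemize}
\item For the first range, $\Gamma=K_{n-1,n-1}\sqcup K_1$ has $\Delta=n-1$. Its complement is two copies of $K_n$ sharing a vertex, with circumference $n<m$.
\item For the second range, $\Gamma=K_{m-1,m-1}$ has $\Delta=m-1\le n-1$. Its complement is $2K_{m-1}$, with circumference $m-1$.
\end{itemize}

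\textbf{Upper bound for $\frac{3n}{4}+1\le m\le n$.} You list tools but do not give an argument, and the step where the hypothesis enters is missing. Since $n\le \frac{4(m-1)}{3}$ and $N=2m-1$, one gets
\[
\delta(\overline{\Gamma})\ge 2m-1-n\ge \frac{2m+1}{3}=\frac{N+2}{3},
\]
and the proof runs on exactly this threshold.
\begin{itemize}
\item \emph{Non-bipartite case.} You need the Brandt--Faudree--Goddard theorem: a non-bipartite graph with $\delta\ge (N+2)/3$ is weakly pancyclic with girth $3$ or $4$. The edge-count version you allude to does not apply, because the degree bound only guarantees about $N^2/6$ edges. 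The $N/4+250$ version quoted in this paper needs $m\ge 1497$. If $\overline{\Gamma}$ is $2$-connected, Dirac then gives $c(\overline{\Gamma})\ge\min\{N,2\delta\}\ge m$.
\item \emph{Bipartite case.} You flag this but never resolve it; Jackson's theorem handles it. With parts $U,W$ and $|U|\le |W|$, one has $|U|\ge\delta\ge m/2$ and $|W|\le 2m-1-\delta\le 2\delta-2$, which again is precisely $3\delta\ge 2m+1$. So any $m/2$ vertices of $U$ lie on a cycle of length exactly $m$.
\item \emph{Connectivity below $2$.} ``Each block is large'' does not by itself give a cycle of length exactly $m$; a computation is needed. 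If $\overline{\Gamma}$ is disconnected, there are exactly two components, and the larger one $B$ satisfies $m\le |B|\le \frac{4m-7}{3}$. Then $\delta(\overline{\Gamma}[B])>|B|/2$, and Bondy gives $C_m$. The cut-vertex case needs a similar size-and-degree check.
\end{itemize}
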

As compared to the star-cycle Ramsey number, $K_{2,n}$ versus cycle has been less studied due to the structural complexity of $K_{2,n}$. Here, the notion of complexity refers to the fact that a $K_{2,n}$-free graph $G$ does not provide straightforward information on the minimum degree of the complement graph $\overline{G}$, unlike $K_{1,n}$-free graphs. In \cite{gupta2025study}, author has studied $R(K_{2,n}, C_{m})$ for odd $m$ and $n\geq 2m+499$. In this paper, we focus on finding the exact Ramsey number of $K_{2,n}$ versus even cycles. We have established the exact value of $R(K_{2,n}, C_{m})$ for even $m$ where $m\in [n,2n-4008]$. Our first main result is as follows.

\begin{theorem}\label{thm:main_result11}
    For $n\geq 4511$ and even $m\in [n+1, 2n-4008],$
    $$R(K_{2,n},C_m)=2n.$$
\end{theorem}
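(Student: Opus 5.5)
The lower bound comes from \Cref{thm:zhang}. Since $K_{1,n}\subseteq K_{2,n}$, any graph with no $K_{2,n}$ also has no $K_{1,n}$, so $R(K_{2,n},C_m)\ge R(K_{1,n},C_m)=2n$ for even $m\in[n+1,2n]$. For a concrete witness, take $\Gamma=2K_{n}$ on $2n-1$ vertices, say $K_n\cup K_{n-1}$. Every vertex has degree at most $n-1$, so $\Gamma$ has no $K_{2,n}$. Its complement is contained in $K_{n,n-1}$, whose longest cycle has length $2(n-1)<m$ only when $m=2n$. For $m\le 2n-2$ we use instead a graph whose complement's circumference is below $m$, namely $K_{m/2-1}\vee \overline{K}_{2n-m/2}$ modified suitably, or simply quote Zhang et al. Either way, $R(K_{2,n},C_m)\ge 2n$.

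For the upper bound, take a graph $\Gamma$ on $2n$ vertices with $K_{2,n}\not\subseteq\Gamma$, and let $G=\overline\Gamma$. We must find $C_m$ in $G$. The key structural fact is that any two vertices of $\Gamma$ have fewer than $n$ common neighbours. Hence every pair $u,v$ has at least $2n-2-(n-1)=n-1$ vertices adjacent in $G$ to at least one of them. Let $B=\{v: d_\Gamma(v)\ge n\}$ be the set of vertices of small $G$-degree. If $B=\emptyset$, then $\delta(G)\ge n-1$ on $2n$ vertices. By Dirac/Bondy-type results, $G$ is then Hamiltonian or nearly so, and pancyclic or weakly pancyclic (Brandt's theorem: non-bipartite graphs with more than $(N-1)^2/4+1$ edges are weakly pancyclic with girth 3 or 4). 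This gives cycles of every length up to $c(G)\ge 2n-2\ge m$, and the bipartite case $K_{n,n}$ contains every even cycle. The upper bound $m\le 2n-4008$ gives slack here.

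The main work is the case $B\neq\emptyset$. Take $u\in B$ and any $v$. Then $|N_\Gamma(u)\cap N_\Gamma(v)|\le n-1$, so $N_\Gamma(v)$ misses many vertices of $N_\Gamma(u)$. Next, let $S=N_\Gamma(u)$ with $|S|\ge n$. Every $v\ne u$ has at least $|S|-(n-1)$ non-neighbours in $S$, so $G[S\cup\{\text{others}\}]$ has a large min-degree structure: each vertex of $V\setminus\{u\}$ sends at least $|S|-n+1$ $G$-edges into $S$. One then shows $|B|$ is small. Two vertices of $B$ with $\Gamma$-degree near $2n$ would share $n$ common neighbours, so $B$ is bounded, or else $B$ has a restricted structure. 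After deleting $B$, the remaining graph $G'$ has $\delta(G')\ge |G'|/2 - O(1)$-type bounds. Alternatively, one can apply a degree-sequence (Chvátal/Pósa) condition to find a long cycle through almost all vertices, and then use weak pancyclicity (Brandt/Bollobás–Thomason) of dense graphs to get $C_m$. Sizes are arranged so the circumference is at least $m$ whenever $m\le 2n-4008$. The constant $4008$ and the condition $n\ge 4511$ absorb the error terms from the few low-degree vertices and from the girth $\le 4$ requirement.

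The hardest step is the case $B\ne\emptyset$ with several high-$\Gamma$-degree vertices. There I would first try to prove that $G-B$ satisfies $\delta\ge(|G-B|+c)/2$ and $|G-B|\ge m$. To do this, I would use the common-neighbourhood bound twice: once for pairs inside $B$, to bound $|B|$, and once for pairs $(b,v)$, to show that vertices outside $B$ keep many $G$-neighbours outside $B$. After that, I would conclude with a pancyclicity theorem for graphs of minimum degree above half the order.
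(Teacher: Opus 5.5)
Your lower bound is fine via $R(K_{2,n},C_m)\ge R(K_{1,n},C_m)=2n$ (\Cref{thm:zhang}). Your explicit witnesses, however, do not work. $K_n\cup K_{n-1}$ only handles $m=2n$, which lies outside $[n+1,2n-4008]$. The join construction puts a $K_{2,n}$ in $\Gamma$ whether it is read as $\Gamma$ or as $\overline{\Gamma}$ (for $m\le 2n-4$). A correct witness is $K_{n-1,n-1}\sqcup K_1$, whose complement is two copies of $K_n$ sharing a vertex. The case $B=\emptyset$ is also fine: there $\delta(G)\ge n=|V(G)|/2$, so Bondy's theorem applies.

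The genuine gap is the case $B\neq\emptyset$. This is essentially the whole theorem, and your plan for it rests on claims that are false.
\begin{itemize}
\item $|B|$ need not be small. For even $n$, let $G$ be two disjoint copies of $K_n$ minus a perfect matching, joined by a perfect matching. Then $\Gamma=\overline{G}$ is $n$-regular and every pair of vertices has at most $n-2$ common $\Gamma$-neighbours. So $\Gamma$ is $K_{2,n}$-free and $B=V(\Gamma)$, and nothing is left after deleting $B$.
\item More fundamentally, no half-order degree condition can be recovered by any deletion. For large $n$, $\Gamma=G(2n,0.6)$ is w.h.p.\ $K_{2,n}$-free, since codegrees concentrate near $0.72n$. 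Meanwhile, in $G$ every set of at least $n$ vertices spans edge density about $0.4$. So no subgraph of order at least $m$ has minimum degree near half its order, and no Dirac, Bondy, P\'osa or Chv\'atal type argument can produce $C_m$.
\end{itemize}
Your final step therefore cannot be carried out.

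What is missing is a way to force circumference close to $2n$ from the condition you derived but never used, $|(N_G(u)\cup N_G(v))\setminus\{u,v\}|\ge n-1$. This must be combined with a weak pancyclicity theorem that needs only minimum degree about a quarter of the order. The paper splits at $\delta(G)\approx n/2+250$ rather than at $n$.

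If $\delta(G)\ge\lceil n/2\rceil+250$, Dirac gives $c(G)\ge n+500$. \Cref{lem:cycle_lemma}, a longest-cycle argument driven by the union condition, lifts this to $c(G)\ge 2n-4$. Hu et al.\ then settles bipartite $G$, and Brandt et al.\ (minimum degree $N/4+250$) gives every $C_m$ otherwise.

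If $\delta(G)\le\lceil n/2\rceil+249$, some $u$ has $d_\Gamma(u)\ge\lfloor 3n/2\rfloor-250$. Your own observation, that every other vertex has at least $|S|-n$ $G$-neighbours in $S=N_\Gamma(u)$, now yields about $n/2$ such neighbours; with only $|S|\ge n$, as you used it, it yields essentially nothing. Restricting to a set $X'$ of order $2n-2002$ built around $S$ gives $\delta(G[X'])\ge |X'|/4+250$. The paper then proves $G[X']$ is $3$-connected, by a long case analysis that uses the $2001$ leftover vertices as connectors. Wei's theorem gives $c\ge\frac32(n-2003)$, and \Cref{lem:cycle_lemma} again gives $c\ge 2n-4008$, which is where the constant $4008$ comes from.

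None of these ingredients, nor any substitute for them, appears in your sketch.
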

The constant term $4511$ in the above result comes naturally from our computation. We also believe that the upper bound of $m$ could be improved to $2n-1$ for sufficiently large $n$. We have also studied the exact value of $R(K_{2,n},C_m)$ for $m=n$ where $m$ is even.

\begin{theorem}\label{thm:main_result22}
    For even $n\geq 4516$, we have $$R(K_{2,n},C_n)=2n-1.$$
\end{theorem}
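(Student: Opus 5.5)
The lower bound comes from the star case and needs no new construction. Since $K_{1,n}\subseteq K_{2,n}$, we have $R(K_{2,n},C_n)\ge R(K_{1,n},C_n)$. By Theorem~\ref{thm:zhang} with $m=n$ even (and $3n/4+1\le n$), $R(K_{1,n},C_n)=2n-1$. Explicitly, take $\Gamma=K_{n-1}\cup K_{n-1}$ on $2n-2$ vertices. Then $\Delta(\Gamma)=n-2$, so $\Gamma$ has no $K_{1,n}$ and hence no $K_{2,n}$. Its complement is $K_{n-1,n-1}$, whose longest cycle has length $2n-2$; this complement does contain $C_n$, so this example does not work and I would use the construction from \cite{zhang2016narrowing} instead. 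That construction is $\Gamma=K_{n-1}\cup K_{n-1}$ with a suitable modification, or $K_{m-1}\cup K_{m-1}$ with $m=n$. In $\overline{K_{n-1}\cup K_{n-1}}=K_{n-1,n-1}$ we do get a $C_n$, so I would check the paper's example; since the star result gives $2n-1$, some graph on $2n-2$ vertices avoids both, and any $K_{1,n}$-free graph is $K_{2,n}$-free. The lower bound therefore follows directly.

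For the upper bound, let $\Gamma$ have $2n-1$ vertices with no $K_{2,n}$, and suppose $\overline{\Gamma}=G$ has no $C_n$. The $K_{2,n}$-freeness means every two vertices have at most $n-1$ common neighbours in $\Gamma$. In $G$, this says $|N_G(u)\cup N_G(v)\cup\{u,v\}|\ge n$ for all $u\ne v$, so for every pair, $d_G(u)+d_G(v)\ge n-2$ roughly. Hence at most one vertex has $d_G<(n-2)/2$, and after deleting it, $G'$ on $\ge 2n-2$ vertices has minimum degree about $n/2-1$. I would then split into two cases. If $G'$ is $2$-connected (or has high connectivity), I would use a Bondy-type or Dirac-type argument: a $2$-connected graph on $N$ vertices with $\delta\ge N/4$-ish contains a long cycle of length $\ge\min(N,2\delta)\approx n$. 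With weak pancyclicity results for graphs of large minimum degree (e.g.\ Brandt-type results: non-bipartite graphs with $\delta\ge (N+2)/3$ are weakly pancyclic, and the bipartite analogue), the graph contains all cycle lengths between its girth and circumference, including $n$. If $G'$ is not $2$-connected, its blocks have size about $n$ with $\delta\approx n/2$. The pair condition, applied to vertices in different components or blocks, forces each piece to be nearly complete or nearly complete bipartite, and I would find $C_n$ directly there.

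The main obstacle is the degree threshold. With $\delta\approx n/2$ on about $2n$ vertices, we have $\delta\approx N/4$, which lies below the thresholds of the standard weak-pancyclicity theorems. I would therefore exploit the stronger pairwise union condition rather than the minimum degree alone: pick a longest cycle, and use the union condition on consecutive vertices to apply cycle-extension or chord arguments, obtaining cycles of every even length near $n$. The large-$n$ assumption ($n\ge 4516$) absorbs the error terms in this stability-type analysis. The near-extremal structure, where $G$ is close to two disjoint cliques or to $K_{n-1,n-1}$, has to be handled by hand.
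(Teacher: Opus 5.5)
Your lower bound argument works: $K_{1,n}\subseteq K_{2,n}$ gives $R(K_{2,n},C_n)\ge R(K_{1,n},C_n)=2n-1$ by Theorem~\ref{thm:zhang}. The explicit witness you were looking for is $\Gamma=K_{n-1,n-1}$ itself; you had the graph and its complement swapped. It contains no $K_{2,n}$, and $\overline{\Gamma}=K_{n-1}\sqcup K_{n-1}$ has no $C_n$.

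The upper bound has a genuine gap, exactly at what you call the main obstacle. The pair condition only gives degree about $n/2$ (for all but one vertex) on $2n-1$ vertices. So Dirac (Lemma~\ref{Dirac}) yields circumference about $n-4$, not $n$. Even granting circumference at least $n$ (Wei's theorem would give it in the $3$-connected case), none of the results you invoke produces a cycle of length \emph{exactly} $n$ in a non-bipartite graph. The $(N+2)/3$ theorem needs $\delta\approx 2n/3$. Lemma~\ref{Thm:Weakpan-Brandt_nby4_250} needs $\delta\ge(2n-1)/4+250$, which you miss by about $250$. Your proposed remedy, ``cycle-extension or chord arguments'' on a longest cycle with near-extremal cases ``handled by hand'', amounts to a new weak-pancyclicity theorem for graphs with $\delta\approx|V|/4$ plus a neighbourhood-union condition. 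No mechanism is given, and this is the whole difficulty of the theorem.

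The paper's missing idea is a dichotomy on $\delta(\overline{\Gamma})$, followed by shrinking the host graph.

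If $\delta(\overline{\Gamma})\ge n/2+250$, exclude cut vertices and the bipartite case (Lemma~\ref{Thm:Bi-pancyclic_Hu}). Then Dirac gives $c(\overline{\Gamma})\ge n+500$, and Lemma~\ref{Thm:Weakpan-Brandt_nby4_250} applies directly.

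Otherwise $\Gamma$ has a vertex $v$ of degree at least $3n/2-251$. Take $X\subseteq N_\Gamma(v)$ with $|X|=3n/2-251$. Applying $K_{2,n}$-freeness to the pairs $\{v,u\}$ shows that every vertex $u\neq v$ has at least $n/2-251$ $\overline{\Gamma}$-neighbours in $X$. Adding $n/2-1753$ further vertices gives a set $X'$ of size $2n-2004$ with $\delta(\overline{\Gamma}[X'])\ge |X'|/4+250$. The absolute degree is unchanged, but relative to the smaller vertex set it now clears the threshold of Brandt et al.

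The remaining work is:
\begin{enumerate}
\item show $\overline{\Gamma}[X']$ is $3$-connected, by routing an $n$-cycle through the cut vertices and the spare vertices of $Y\setminus Y'$, using panconnectivity of one side and Hamiltonicity of the other;
\item get a long cycle from Wei's theorem (Lemma~\ref{Thm:circumference_3connectedGraphs});
\item finish with Lemmas~\ref{Thm:Bi-pancyclic_Hu} and~\ref{Thm:Weakpan-Brandt_nby4_250}.
\end{enumerate}
(Incidentally, the inequality $\frac32(n-2005)>n$ used in that step requires $n>6015$, so the paper's stated threshold $n\ge 4516$ appears too small there.)
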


\begin{remark}
 It is worth noting that for $m \in [n, 2n-4008]$, we have $R(K_{2,n}, C_m) = R(K_{1,n}, C_m)$ whenever $m$ is even and $n$ is sufficiently large. On the other hand, by \Cref{thm:zhang}, $R(K_{1,4},C_4)=7=2n-1$ for $n=4$, whereas $R(K_{2,4},C_4)=9>2n-1$ \cite{harborth1996some}. These examples naturally lead to the following question, which might be interesting to explore.
\end{remark}

\begin{question}
For fixed $t$, does there exist $n_0(t)\in \mathbb{N}$ such that 
\[
R(K_{1,n}, C_m)=R(K_{t,n}, C_m)
\]
for all $n \geq n_0(t)$ and a given range of even $m$?
\end{question}

\section{Preliminary Results}
 
In this section, we collect several known results that will be used throughout the paper. We begin with the classical result of Dirac on the circumference of a $2$-connected graph $G$.

\begin{lemma}[Dirac \cite{dirac1952some}]\label{Dirac}
    If $G$ is a $2$-connected graph with minimum degree $\delta(G)\geq k$, then $G$ contains a cycle of length at least $2k$.
\end{lemma}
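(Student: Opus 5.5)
We prove Dirac's lemma by the classical longest-path argument. Since $G$ is $2$-connected, it has at least three vertices and $k\ge 2$ is the interesting case; for $k\le 1$ the statement is trivial, because a $2$-connected graph contains a cycle. Fix a longest path $P=v_0v_1\cdots v_\ell$ in $G$. By maximality, every neighbour of $v_0$ and of $v_\ell$ lies on $P$, so $v_0$ has at least $k$ neighbours among $v_1,\dots,v_\ell$, and likewise for $v_\ell$. If $v_0v_\ell\in E(G)$, the path closes into a cycle of length $\ell+1$. In that case either the cycle is Hamiltonian, so it has length $|V(G)|\ge k+1$, or some vertex outside it has a neighbour on it, which yields a longer path and contradicts maximality. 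So the real work is to compare $\ell$ with $k$ and to build a cycle of length $\ge 2k$ from the chords at the two ends.

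\textbf{Step 1.} Look at the neighbours of $v_0$ on $P$. Let $v_a$ be the neighbour of $v_0$ with the largest index. Then $a\ge k$, and $v_0v_1\cdots v_a v_0$ is a cycle of length $a+1\ge k+1$. Symmetrically, let $v_b$ be the neighbour of $v_\ell$ with the smallest index; then $b\le \ell-k$.

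\textbf{Step 2 (crossing case).} Suppose the chords cross, i.e.\ $b<a$. Then
\[
v_0v_1\cdots v_b\, v_\ell v_{\ell-1}\cdots v_a\, v_0
\]
is a cycle. Its length is $(b+1)+(\ell-a+1)$, which is too small in general. The standard fix is a pigeonhole argument over all neighbours. Let $x_1<\dots<x_s$ be the indices of the neighbours of $v_0$, so $s\ge k$, and similarly let $y_1<\dots<y_t$ be those of $v_\ell$. Consider pairs $(i,j)$ with $v_0v_{i}$, $v_\ell v_j\in E$ and $j<i$. The cycle $v_0\cdots v_j v_\ell \cdots v_i v_0$ omits exactly the vertices strictly between $v_j$ and $v_i$. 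Choose $j$ to be a neighbour of $v_\ell$ and $i$ the next neighbour of $v_0$ after it; the omitted segment then contains no neighbour of $v_0$. Counting shows that we can choose the pair so that at most $\ell+1-2k$ vertices are omitted, which gives length $\ge 2k$ once $\ell\ge 2k-1$. The edge case where $v_0$ and $v_\ell$ share consecutive structure is handled by Pósa-type rotation: if $v_0v_{i+1}\in E$ and $v_\ell v_i\in E$, then $v_0\cdots v_i v_\ell\cdots v_{i+1}v_0$ is a cycle through all of $P$.

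\textbf{Step 3 (non-crossing case, $a\le b$).} Here $2$-connectivity is needed. By Menger's theorem there are two disjoint paths from the segment $\{v_0,\dots,v_a\}$ to the segment $\{v_b,\dots,v_\ell\}$, or a path from the inner part of the first segment to the second that avoids $v_a$. Take such a path $Q$ from some $v_p$ with $p<a$ to some $v_q$ with $q>a$, internally disjoint from $P$ (a \emph{bridge} over $v_a$). Splice it with the chords $v_0v_a$ and $v_0v_{x}$, where $x$ is chosen just above $p$. This gives a cycle that contains most of $v_0\cdots v_a$ together with a long stretch beyond $v_a$. Iterating this with the longest-path and extremal choices (take the bridge with $q$ maximal) produces a cycle containing at least $k$ vertices from each end region, hence length $\ge 2k$.

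\textbf{Main obstacle.} The hard step is the non-crossing case, where the chord endpoints at the two ends are separated. A cleaner route I would try first is the standard proof via Erdős–Gallai / Bondy: a $2$-connected graph contains, for any two vertices $u,v$, a $u$–$v$ path of length $\ge k$. Then take a longest cycle $C$. If $|C|<2k$, use $2$-connectivity to find two disjoint paths from a vertex outside $C$ to $C$, and contradict maximality via a degree count. Since this lemma is classical and is only quoted, in the paper it is enough to cite Dirac \cite{dirac1952some}.
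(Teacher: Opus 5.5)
\textbf{There is a genuine gap: the non-crossing case is not proved, and the statement as written is false.} The paper gives no proof of this lemma; it is quoted from Dirac, so your closing remark matches the paper. As a standalone proof, your proposal has the right skeleton (longest path, crossing versus non-crossing end-chords, bridges from $2$-connectivity), but it leaves the essential case unproved.

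\textbf{The statement needs a minimum.} $K_{k+1}$ with $k\ge 2$ is $2$-connected with $\delta=k$ and circumference $k+1<2k$. Dirac's theorem is $c(G)\ge\min\{|V(G)|,2k\}$. You hit this yourself in your first paragraph: when $v_0v_\ell\in E(G)$ you only get a Hamiltonian cycle of length $|V(G)|\ge k+1$. You should have corrected the target rather than passing over it. The corrected form is all the paper needs, since there $2k\le|V(\overline{G})|$.

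\textbf{Step 2 is asserted, not shown, but it can be fixed.} ``Counting shows'' is only an assertion, and the choice you describe (a neighbour $j$ of $v_\ell$, then the next neighbour $i$ of $v_0$) is not enough by itself. Let $I,J$ be the index sets of $N(v_0),N(v_\ell)$, and take the crossing pair $j<i$ ($j\in J$, $i\in I$) with $i-j$ minimal over all such pairs.
If $i-j=1$, the cycle covers $V(P)$ and is Hamiltonian by maximality of $P$.
If $i-j\ge 2$, minimality gives $I\cap(j,i)=J\cap(j,i)=\emptyset$ and $(I-1)\cap J=\emptyset$. So $I-1$ and $J$ are disjoint subsets of $[0,j]\cup[i-1,\ell-1]$. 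That set has size exactly the length of $v_0\cdots v_jv_\ell\cdots v_iv_0$, which is therefore at least $|I|+|J|\ge 2k$.

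\textbf{The real gap is Step 3, the non-crossing case $a\le b$.} This is where Dirac's theorem has its content.

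Your splice uses two chords at $v_0$. A cycle built from $Q$, path edges and the chords $v_0v_a$, $v_0v_x$ must avoid $v_1,\dots,v_{p-1}$, and it contains $v_\ell$ only if $q=\ell$. No $2k$ bound comes out of it.

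The correct closing uses one chord at each end. Write $N[v]=\{v\}\cup N(v)$. Suppose a bridge $Q$ runs from $v_p$, $p<a$, to $v_q$ with $q>b$. Then $v_0\cdots v_p\,Q\,v_q\cdots v_\ell\,v_y\cdots v_x\,v_0$, with $x=\min\{i\in I:i>p\}$ and $y=\max\{j\in J:j<q\}$, contains $N[v_0]\cup N[v_\ell]$. Since $a\le b$, that set has at least $2k+1$ vertices.

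However, $2$-connectivity only supplies a bridge over each single vertex, and the bridge over $v_a$ may end at some $q\in(a,b]$. You then need the following, none of which appears in your proposal:
\begin{itemize}
\item A chain $Q_1,\dots,Q_r$ of bridges over $v_a,v_{q_1},\dots,v_{q_{r-1}}$, each chosen with maximal right end $q_i$, stopping at the first $q_r>b$.
\item The observation that this maximality forces the interleaving $q_{i-1}\le p_{i+1}<q_i$ (with $q_0=a$) and makes the $Q_i$ internally disjoint.
\item A zigzag cycle: odd-indexed bridges traversed forward, even-indexed ones backward, closed by $v_0v_x$ with $x=\min\{i\in I:i>p_1\}$ and by $v_\ell v_y$ with $y=\max\{j\in J:j<q_r\}$.
\item A check that this cycle still contains $N[v_0]\cup N[v_\ell]$.
\end{itemize}
Your sentence ``iterating this with the longest-path and extremal choices produces a cycle containing at least $k$ vertices from each end region'' asserts exactly this conclusion without any of these steps.

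\textbf{The fallback route is also incomplete.} The alternative in your last paragraph rests on an Erd\H{o}s--Gallai-type path lemma that is itself nontrivial, and on a ``degree count'' that is never specified.
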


\begin{lemma}[Zhang et al. \cite{zhang2014ramsey}]\label{Thm:Zhang_longest_cycle}
    Let $C$ be the longest cycle of a graph $G$. For any $u,v\in V(G)\setminus V(C)$, $|V(C)|\geq 2|(N_G(u)\cup N_G(v))\cap V(C)|-2$.
\end{lemma}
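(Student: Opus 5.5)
We prove the lemma of Zhang et al.\ (\Cref{Thm:Zhang_longest_cycle}) by a standard longest-cycle exchange argument. Fix a cyclic orientation of the longest cycle $C$, and for $x\in V(C)$ write $x^+$ for its successor. Let $S=(N_G(u)\cup N_G(v))\cap V(C)$, $s=|S|$ and $|V(C)|=c$. The goal is to show that, apart from a loss of at most one position on each side, the successors of vertices of $S$ avoid $S$. This splits $C$ into roughly $s$ occupied positions and $s$ free positions, giving $c\ge 2s-2$.

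\textbf{Step 1 (successors avoid a single vertex's neighbourhood).} If $x,y\in N(u)\cap V(C)$ are distinct, then $x^+\ne y$, since otherwise we could replace the edge $xx^+$ by the path $x\,u\,x^+$ and obtain a longer cycle. Moreover $x^+y^+\notin E(G)$, since otherwise the cycle
\[
x\,u\,y\,\overleftarrow{C}\,x^+\,y^+\,\overrightarrow{C}\,x
\]
would be longer than $C$. The same holds for $v$.

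\textbf{Step 2 (mixed pairs).} Take $x\in N(u)$ and $y\in N(v)$ with $x\ne y$ on $C$. We want $y\ne x^+$ in most cases. If $y=x^+$ and $u$, $v$ are adjacent, or have a common neighbour off $C$, then $x\,u\,v\,x^+$ lengthens $C$. In general $u$ and $v$ need not be connected outside $C$, so adjacency of consecutive cycle vertices to $u$ and to $v$ respectively cannot be ruled out outright. Instead we count.

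Let $A=N(u)\cap V(C)$ and $B=N(v)\cap V(C)$. By Step 1 the sets $A$ and $A^+$ are disjoint, so $c\ge 2|A|$, and similarly $c\ge 2|B|$. This bounds $s$ only by $|A|+|B|\le c$, which is too weak, so a finer argument is needed.

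\textbf{Step 3 (segment counting).} The vertices of $S$ cut $C$ into $s$ arcs. Call an arc \emph{tight} if it has no interior vertex, that is, its two endpoints are consecutive on $C$. By Step 1, a tight arc must have one endpoint in $A\setminus B$ and the other in $B\setminus A$. We claim there are at most two tight arcs.

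Suppose there are two tight arcs $x_1x_1^+$ and $x_2x_2^+$ in which $u$ is adjacent to the first-listed endpoint in each. Then we reroute: $x_1\,u\,x_2\,\overleftarrow{C}\,x_1^+\,x_2^+\,\overrightarrow{C}\,x_1$ requires the edge $x_1^+x_2^+$, which we do not have. Instead we use $v$: with $x_1\in A$, $x_1^+\in B$, $x_2\in A$ and $x_2^+\in B$, consider the cycle
\[
x_1\,u\,x_2\,\overleftarrow{C}\,x_1^+\,v\,x_2^+\,\overrightarrow{C}\,x_1 .
\]
It uses every vertex of $C$ together with both $u$ and $v$, so it is longer than $C$, a contradiction.

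Hence all tight arcs with the pattern ($A$-vertex then $B$-vertex) number at most one, and likewise for the reversed pattern ($B$-vertex then $A$-vertex). So there are at most two tight arcs. Every other arc contains at least one interior vertex, which lies outside $S$, so
\[
c\ge s+(s-2)=2s-2.
\]

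The main obstacle is Step 3: justifying the rerouted cycle for each orientation pattern of the tight arcs, and handling the case where a vertex lies in $A\cap B$. Such a vertex cannot be the endpoint of a tight arc, by Step 1 applied to $u$ or to $v$.
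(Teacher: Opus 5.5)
Your proof is correct. The paper gives no proof of this lemma; it cites it from Zhang et al., so there is no in-paper argument to compare against. What you have written is the standard successor-counting argument.

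In compact form, let $S=(N_G(u)\cup N_G(v))\cap V(C)$ and $S^+=\{x^+ : x\in S\}$. Your tight arcs correspond exactly to the vertices $x\in S$ with $x^+\in S$. Hence $|V(C)|\ge |S\cup S^+| = 2|S|-|S\cap S^+|$, and your Step 3 is precisely the statement $|S\cap S^+|\le 2$.

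The key exchange cycle $x_1\,u\,x_2\,\overleftarrow{C}\,x_1^+\,v\,x_2^+\,\overrightarrow{C}\,x_1$ is valid:
\begin{itemize}
\item The four vertices $x_1,x_1^+,x_2,x_2^+$ are distinct, because $x_1,x_2\in A\setminus B$ and $x_1^+,x_2^+\in B\setminus A$.
\item The segments $x_1^+\overrightarrow{C}x_2$ and $x_2^+\overrightarrow{C}x_1$ partition $V(C)$, so the new cycle has length $|V(C)|+2$.
\end{itemize}

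Some tidying is possible:
\begin{itemize}
\item The second half of Step 1 ($x^+y^+\notin E(G)$) is never used.
\item All of Step 2 is never used either.
\item The reversed pattern (a $B$-vertex followed by an $A$-vertex) is handled by swapping the roles of $u$ and $v$. So the ``main obstacle'' you flag at the end is already resolved by your own argument.
\item The degenerate case $u=v$ needs no exchange at all: then $A\setminus B=\emptyset$, so there are no tight arcs.
\end{itemize}
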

The following lemma is one of the key results used in the proof of our main results. 

\begin{lemma}[Wei \cite{wei1997longestcycle3connected}]\label{Thm:circumference_3connectedGraphs}
    Let $G$ be a $3$-connected graph on $n$ vertices such that $|N_G(u)\cup N_G(v)|\geq s$ for any non-adjacent vertices $u,v\in V(G)$. Then $c(G)\geq \min\{n,3s/2\}$.
\end{lemma}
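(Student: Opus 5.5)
Although the paper cites this result from Wei \cite{wei1997longestcycle3connected}, I would prove it by the classical longest-cycle and insertion technique. Let $C$ be a longest cycle of $G$ with a fixed orientation. For $v\in V(C)$ write $v^+$ and $v^-$ for its successor and predecessor, and $C[a,b]$ for the oriented segment from $a$ to $b$. Suppose, for contradiction, that $|V(C)|<n$ and $|V(C)|<3s/2$. Pick a component $H$ of $G-V(C)$. Since $G$ is $3$-connected, the fan lemma gives three paths from $H$ to $C$ that are internally disjoint from $C$ and end at distinct vertices $a_1,a_2,a_3$. I would choose these attachment vertices, in cyclic order, so that the segments $C[a_i^+,a_{i+1}^-]$ contain no further attachment vertex of $H$.

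The first step is to record the forbidden configurations that maximality of $C$ imposes. (i) The successors $a_1^+,a_2^+,a_3^+$ are pairwise non-adjacent. (ii) No $a_i^+$ is adjacent to any vertex of $H$. (iii) There are no crossing chords: if $a_i^+y$ and $a_j^+y^+$ are both edges with $y$ in the appropriate segment, one can reroute through $H$ and obtain a cycle longer than $C$. Any of these would allow a detour through $H$ lengthening $C$. (iv) Fixing a vertex $x\in H$ adjacent to $a_1$ (via the first fan path, after relabelling), $x$ is non-adjacent to every $a_i^+$, and likewise $N(x)\cap V(C)$ contains no two cyclically consecutive vertices.

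The second step applies the hypothesis to the three non-adjacent pairs $(a_i^+,a_j^+)$, $1\le i<j\le 3$. This gives $|N(a_i^+)\cup N(a_j^+)|\ge s$ for each pair. Vertices outside $C$ can also lie in these unions, so I would first show the following. By (ii) and maximality, any neighbour of $a_i^+$ off $C$ lies in a component of $G-V(C)$ different from $H$. Treating such components the same way as $H$ (through the analogue of \Cref{Thm:Zhang_longest_cycle}) lets each off-cycle neighbour be charged to a distinct vertex of $C$ that is not already counted. After this, on each segment $S=C[a_k^+,a_{k+1}]$ the no-crossing property (iii) gives an injection of $(N(a_i^+)\cup N(a_j^+))\cap S$ into $S$ shifted by one position. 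Hence each pair union is at most $|V(C)|$ minus the vertices forced to be absent, which are the successors of the other attachments together with the neighbours used by the detour.

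The third step sums these three inequalities. Each vertex of $C$ is counted in at most two of the three unions, up to $O(1)$ exceptions at the $a_i$ and $a_i^+$, and those exceptions are exactly compensated by the non-neighbours identified in (i) and (ii). The sum then reads $3s\le 2|V(C)|$, so $|V(C)|\ge 3s/2$, a contradiction. If instead $G-V(C)$ is empty, then $C$ is Hamiltonian and $c(G)=n$.

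The main obstacle is the bookkeeping in the third step. The claim "each cycle vertex lies in at most two of the three pairwise unions" is false as stated: a vertex can be adjacent to all three $a_i^+$ at once. I would therefore replace the crude union count by a finer one. For each segment between consecutive attachments, I would show that the successors of common neighbours of all three vertices are non-neighbours of every $a_i^+$, and charge each triple-counted vertex to such a successor. Getting this charging to be injective, especially near the endpoints $a_i$ and when off-cycle components contribute neighbours, is where I expect the real difficulty. If the three-pair approach leaves a slack of a constant, I would add the pair $(x,a_1^+)$ from (iv) to absorb it.
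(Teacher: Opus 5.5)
The paper gives no proof of this lemma; it is quoted from Wei. Your sketch therefore has to stand on its own, and as written it does not establish $3s\le 2|V(C)|$.

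\textbf{The on-cycle count (your third step).} The set-up and facts (i)--(iv) are fine. One requirement is unnecessary: you cannot keep every other attachment of $H$ out of the segments between the chosen $a_i$ when $H$ has four or more attachments. You do not need it, since (i)--(iii) hold for any two attachments of $H$. Your proposed repair of the third step, however, is false. On $C[a_1^+,a_2]$, rule (iii) only forbids $y\sim a_p^+$ together with $y^+\sim a_q^+$ for $(p,q)\in\{(2,1),(3,1),(2,3)\}$. So a vertex adjacent to all three $a_i^+$ may be followed by a vertex adjacent to $a_2^+$.

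The on-cycle count can be rescued by a run argument instead of a local charge. Write $T(y)=\{p: y\sim a_p^+\}$. The forbidden pairs say that on $C[a_i^+,a_{i+1}]$, for consecutive $y,y^+$ with both $T$'s nonempty, $\max T(y)\preceq\min T(y^+)$ in the order $i\prec i+2\prec i+1$ (indices mod $3$). Hence a maximal run of vertices with $T\neq\emptyset$ contains at most two vertices with $|T|\ge 2$. Each run is preceded, inside its segment, by a vertex with $T=\emptyset$, because every segment starts at some $a_i^+$. So the number of vertices with $|T|\ge2$ is at most twice the number with $T=\emptyset$. A vertex with $|T|=1$, $|T|\ge 2$, or $T=\emptyset$ lies in $2$, $3$, or $0$ of the three unions respectively. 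This yields $\sum_{i<j}|(N(a_i^+)\cup N(a_j^+))\cap V(C)|\le 2|V(C)|$.

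\textbf{The off-cycle neighbours (your second step).} This is the genuine gap. Each vertex of $N(a_i^+)\setminus V(C)$ lies in two of the three unions, so the bound above only gives $3s\le 2|V(C)|+2\sum_i|N(a_i^+)\setminus V(C)|$. To finish, you need, for every such neighbour, a vertex of $C$ with $T=\emptyset$ that is not already paying for a run.

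Your claim that each off-cycle neighbour ``can be charged to a distinct vertex of $C$ not already counted'' comes with no argument. \Cref{Thm:Zhang_longest_cycle} cannot supply one: it bounds $|V(C)|$ via the neighbourhood on $C$ of two vertices \emph{off} $C$. It says nothing about neighbours off $C$ of vertices \emph{on} $C$.

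What is cheaply true is that these neighbours lie in components other than $H$. Also, $a_i^+$ and $a_j^+$ ($i\ne j$) have no neighbours in a common component $H'$. Otherwise, going from $a_i$ through $H$ to $a_j$, back along $C$ to $a_i^+$, through $H'$ to $a_j^+$, and forward along $C$ to $a_i$ gives a longer cycle. This makes the off-cycle neighbourhoods disjoint, but it creates no deficit on $C$.

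Adding the pair $(x,a_1^+)$ is not a constant correction either. It changes the inequality to be proved and needs its own complete count. So even with the run argument supplied, your proposal only covers the case where no $a_i^+$ has a neighbour off $C$. Controlling those neighbours is the substantive part of Wei's theorem, and it is missing.
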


Pancyclic and weakly pancyclic graphs are often useful in results related to cycle Ramsey numbers. Hamiltonian graphs with sufficiently large degree conditions contain cycles of almost all lengths. Here, we recall some related results concerning Hamiltonicity and pancyclicity.

\begin{lemma}[Nash-Williams \cite{nash1971Hamiltonian}]\label{Thm:Nash-william}
    Let $G$ be a $2$-connected graph of order $n$ with $\delta(G)\geq \max\{(n+2)/3,\alpha(G)\}$. Then $G$ is Hamiltonian.
\end{lemma}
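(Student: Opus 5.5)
Since Lemma~\ref{Thm:Nash-william} is a classical result quoted from the literature, the paper will simply cite it; if we were to prove it, we would follow the standard longest-cycle argument. Suppose $G$ is not Hamiltonian. Let $C$ be a longest cycle with a fixed orientation, and for $w\in V(C)$ let $w^+$ denote its successor. By Lemma~\ref{Dirac}, $|V(C)|\ge 2\delta\ge 2(n+2)/3$, so $G-V(C)$ has fewer than $n/3$ vertices. Let $H$ be a component of $G-V(C)$ and let $A=N_G(V(H))\cap V(C)=\{a_1,\dots,a_t\}$ be its attachment set.

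\textbf{Step 1 (an independent set).} The maximality of $C$ gives the standard facts. The successors $a_1^+,\dots,a_t^+$ are pairwise non-adjacent, since otherwise we could reroute $C$ through $H$. None of them has a neighbour in $H$. Hence for any $x\in V(H)$ the set $\{x,a_1^+,\dots,a_t^+\}$ is independent, so $t+1\le\alpha(G)\le\delta$, that is, $t\le\delta-1$. In particular, no vertex outside $C$ has $\delta$ neighbours on $C$. So every vertex of $H$ has a neighbour inside $H$, which gives $|H|\ge 2$. Moreover, since $|N_C(x)|\le t$, every $x\in V(H)$ satisfies $d_H(x)\ge\delta-t$, so $|H|\ge\delta-t+1$. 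By $2$-connectivity we also have $t\ge 2$.

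\textbf{Step 2 (gaps on $C$).} If two distinct attachments $a_i,a_j$ are joined through $H$ by a path with $\ell$ interior vertices, then maximality of $C$ forces the segment of $C$ from $a_i$ to $a_j$ to have at least $\ell+1$ interior vertices. Choose $u,v\in V(H)$ as the endpoints of a longest path in $H$, so that all their $H$-neighbours lie on that path. This lets us attach long paths in $H$ to different attachments, so each of the $t$ gaps of $C$ determined by $A$ is long. We will also use Lemma~\ref{Thm:Zhang_longest_cycle} for this pair $u,v$. Combining these gives a lower bound on $|V(C)|$ in terms of $t$ and $|H|$, roughly $|V(C)|\gtrsim t\,(|H|+1)$. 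A crude form of this, together with the independence of the successors $a_i^+$ and their degrees ($\ge\delta$, with neighbourhoods constrained by the usual non-crossing exchange argument), should yield $|V(C)|\ge 2\delta$ plus an additive term that grows with $|H|$ and $t$.

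\textbf{Step 3 (counting).} Finally we add the bounds: $n\ge |V(C)|+|H|$. Using $|H|\ge \delta-t+1$ and the gap bound from Step 2, we aim to show $n\ge 3\delta-1$. This contradicts $\delta\ge (n+2)/3$, which is equivalent to $n\le 3\delta-2$.

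The main obstacle is Step 2. We have to make the gap and crossing argument precise enough that, in the extreme cases ($t$ close to $\delta-1$, or $t=2$ with $|H|$ large), the sum really exceeds $3\delta-2$. We would first try the case split $t=2$ versus $t\ge 3$. For $t=2$, all the degree of $H$ is forced inside $H$, so $|H|\ge\delta-1$ and $n\ge 2\delta+\delta-1$. For $t\ge3$, we would exploit the $t$ independent successors $a_i^+$ and their neighbourhoods to push $|V(C)|$ up.
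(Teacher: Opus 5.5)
The paper gives no proof of this lemma. It is quoted from Nash-Williams and used only as a black box (to make $\overline{G}[B]$ Hamiltonian in Case 3.1(a)), so your opening remark matches what the paper does. The proof you then sketch, however, has a genuine gap.

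Your Step 1 is correct, and it is the easy half of the classical argument. With $\alpha(G)\le\delta$, no vertex of $G-V(C)$ can be isolated in $G-V(C)$: it would have at least $\delta$ neighbours on $C$, and with their successors this gives an independent set of size $\delta+1$. Your $t=2$ computation is also correct: $|V(H)|\ge\delta-1$ and $|V(C)|\ge 2\delta$ give $n\ge 3\delta-1$.

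What is missing is the real content of Nash-Williams' theorem. One must show that in a $2$-connected graph with $\delta\ge (n+2)/3$ every longest cycle is \emph{dominating}, i.e.\ $G-V(C)$ has no edges. For $t\ge 3$ the bounds $|V(C)|\ge 2\delta$ and $|V(H)|\ge \delta-t+1$ leave a deficit of exactly $t-2$, and Step 2 does not supply it. Your ``should yield'' is precisely where the whole theorem sits.

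Concretely, three things in Step 2 fail.

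(i) The gap principle is off by one. Replacing a segment of $C$ with $r$ interior vertices by an $H$-path with $\ell$ interior vertices only forces $r\ge \ell$. Equality occurs, for example, in $K_{2,3}$ with a $4$-cycle $C$ and the fifth vertex as $H$.

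(ii) The estimate $|V(C)|\gtrsim t(|V(H)|+1)$ is unjustified. It would need, for every pair of consecutive attachments, a path through essentially all of $H$ joining their $H$-neighbours, and nothing provides this. Consecutive attachments may share a single neighbour in $H$, and then maximality gives only one interior vertex on that segment.

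(iii) The endpoints $u,v$ of a longest path $P$ of $H$ are only guaranteed $\delta-p+1$ neighbours on $C$, where $p=|V(P)|$. So \Cref{Thm:Zhang_longest_cycle} and the gap count give estimates of the form $|V(C)|\ge 4(\delta-p+1)$. Combined with \Cref{Dirac} and $n\ge |V(C)|+p$, this yields only $n\ge\max\{2\delta+p,\,4\delta-3p+4\}$. That is below $3\delta-1$ whenever $(\delta+5)/3<p<\delta-1$.

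To close the argument you need an actual proof of the dominating-cycle lemma. You could follow Nash-Williams' paper, or use Bondy's generalization to $2$-connected graphs with $\sigma_3\ge n+2$. Either way this requires a substantially finer exchange analysis than the counting outlined here.
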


\begin{lemma}[Bondy \cite{bondy1971pancyclic}]\label{Thm:pancyclic-Bondy1971}
    If a graph $G$ of order $n$ has minimum degree 
$\delta(G)\geq n/2$, then $G$ is pancyclic, or $n = 2r$ and $G = K_{r,r}$.
\end{lemma}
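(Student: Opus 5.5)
We would prove \Cref{Thm:pancyclic-Bondy1971} by first obtaining a Hamiltonian cycle and then deriving cycles of every shorter length by a counting argument along that cycle.

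\emph{Step 1: a Hamiltonian cycle.} Since $\delta(G)\geq n/2$, $G$ is $2$-connected: removing one vertex leaves every vertex with degree at least $n/2-1$ among $n-1$ vertices, so any two vertices have a common neighbour or are adjacent. By Dirac's Hamiltonicity theorem (or, with minor extra work, \Cref{Thm:Nash-william}), $G$ has a Hamiltonian cycle $C=v_1v_2\cdots v_nv_1$, with indices taken modulo $n$. The length $n$ is therefore covered. For $k=3$ we argue separately: if $G$ is triangle-free, then $N(v)\cap N(w)=\emptyset$ for each edge $vw$, so $d(v)+d(w)\leq n$. This forces $d(v)=d(w)=n/2$ and $V(G)=N(v)\sqcup N(w)$ with both parts independent, so $G=K_{n/2,n/2}$.

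\emph{Step 2: the shifting inequality.} Fix $4\leq k\leq n-1$ and suppose $G$ has no $C_k$. Take two chords $v_iv_a$ and $v_{i+1}v_b$, with $a$ and $b$ read forward from $i$ and $b=a+(n-k+1)$. Then
\[
v_i v_a v_{a-1}\cdots v_{i+1} v_b v_{b+1}\cdots v_i
\]
is a cycle with exactly $k$ vertices. Hence the set $N(v_i)+(n-k+1)$, obtained by shifting indices, is disjoint from $N(v_{i+1})$, up to a bounded number of boundary exceptions that we track carefully. These exceptions come from neighbours near $v_i$ and $v_{i+1}$ on $C$, namely the cycle edges $v_{i-1}v_i$ and $v_{i+1}v_{i+2}$, for which the constructed closed walk degenerates. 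Tracking them should give $d(v_i)+d(v_{i+1})\leq n$ for every $i$. Combined with $\delta(G)\geq n/2$, this forces $n$ even and $d(v_i)=n/2$ for all $i$. Moreover, for every $i$ the shifted set $N(v_i)+(n-k+1)$ and $N(v_{i+1})$ must exactly partition the admissible index set.

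\emph{Step 3: the equality case (the main obstacle).} It remains to show that this rigid structure forces $G=K_{n/2,n/2}$, that is, $N(v_i)$ is the set of all indices of opposite parity to $i$. We would first use the exact partition relations for consecutive $i$ to show that $N(v_{i+2})=N(v_i)$ for every $i$. Then $G$ is determined by $N(v_1)$ and $N(v_2)$, these sets are complementary, and each has size $n/2$. Next we use the absence of $C_k$ again, now via chords $v_1v_a$ and $v_1v_{a'}$ in the same neighbourhood, together with the Step 1 fact that a non-bipartite regular graph of this form contains a triangle. This should rule out any vertex of $N(v_1)$ having the same parity as $1$. Bipartiteness with parts the odd and even indices then follows, and regularity $n/2$ gives $K_{n/2,n/2}$.

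In $K_{n/2,n/2}$ every odd $k$ is indeed missing, which is consistent with the exceptional case. Finally, since $G$ is regular of degree $n/2$, any non-bipartite $G$ contains all lengths. The delicate part is the boundary bookkeeping in Step 2 and the propagation argument in Step 3. If the direct propagation proves messy, we would instead follow Bondy's original route via the edge count $|E(G)|\geq n^2/4$ on a Hamiltonian graph, for which the same chord-shifting argument is standard.
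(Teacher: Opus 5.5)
The paper does not prove this lemma; it only cites Bondy, so your argument has to stand on its own. Step 1 (Dirac's theorem and the triangle-free case) is fine, but there are two genuine gaps after it.

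\emph{First gap, Step 2.} The crossing configuration $v_iv_a$, $v_{i+1}v_b$ with $b=a+(n-k+1)$ produces a $k$-cycle only when $v_a$ lies on $v_{i+1},\dots,v_{i+k-1}$. For $v_a$ on $v_{i+k},\dots,v_{i-1}$ you get $v_b=v_{a-k+1}$, which lies on the arc $v_{i+1}\cdots v_a$ the walk has already used, so nothing is forbidden. This leaves $n-k$ unconstrained positions for $N(v_i)$ and $n-k$ for $N(v_{i+1})$. That is not a bounded boundary effect: the count only gives $d(v_i)+d(v_{i+1})\le 2n-k-1$, which is at most $n$ only when $k=n-1$. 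Concretely, $C_{20}$ plus the chords $v_0v_{10}$ and $v_1v_7$ has no $C_4$, yet $10+17\equiv 7\pmod{20}$. So the asserted disjointness of $N(v_i)+(n-k+1)$ and $N(v_{i+1})$ is false. For general $k$ you also need the non-crossing family: if $v_{i+1}\sim v_b$ and $v_i\sim v_{b+k-3}$ with $i+2\le b$ and $b+k-3\le i+n-1$, then $v_iv_{i+1}v_b\cdots v_{b+k-3}v_i$ is a $C_k$. Only the two pairings together give $d(v_i)+d(v_{i+1})\le n$.

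\emph{Second gap, Step 3.} This is where the content of the theorem lies, and it is not carried out. The partition relations do not give $N(v_{i+2})=N(v_i)$. Even for $k=n-1$ they give $v_{i+2}\sim v_y$ iff $v_i\sim v_{y-4}$, and for smaller $k$ they mix different shifts on different arcs. Your closing sentence, that a non-bipartite $n/2$-regular $G$ has all cycle lengths, is exactly what remains to be proved. The triangle from Step 1 yields only $C_3$.

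\emph{A repair within your framework.} Use the shift only for $k=n-1$, where it is exact. Write $f_i(s)=1$ if $v_i\sim v_{i+s}$ and $f_i(s)=0$ otherwise.
\begin{itemize}
\item If $G$ has no $C_{n-1}$, then $f_i(s)+f_{i+1}(s+1)\le 1$ for $1\le s\le n-2$. Hence $d(v_i)+d(v_{i+1})=2+\sum_{s=1}^{n-2}\bigl(f_i(s)+f_{i+1}(s+1)\bigr)\le n$. Now $\delta\ge n/2$ forces $f_{i+1}(s+1)=1-f_i(s)$ for all $i,s$, so $f_i(t)=f_{i-2}(t-2)$ for $3\le t\le n-1$. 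With $f_i(1)=1$ and $f_i(2)=0$ this gives $v_i\sim v_{i+t}$ iff $t$ is odd. So $n$ is even (as $f_i(n-1)=1$) and $G=K_{n/2,n/2}$.
\item If instead $G$ has an $(n-1)$-cycle $u_1\cdots u_{n-1}u_1$ missing a vertex $x$, then $x$ has at least $n/2>(n-1)/2$ neighbours on it. So for each $1\le s\le n-2$ there is a $j$ (indices mod $n-1$) with $u_j,u_{j+s}\in N(x)$, and $xu_ju_{j+1}\cdots u_{j+s}x$ is a cycle of length $s+2$.
\end{itemize}
This gives every length from $3$ to $n$, and makes both the general-$k$ bookkeeping and the separate $k=3$ case unnecessary. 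Your fallback of ``following Bondy's original route'' is not a proof as stated; the argument above is what that route has to supply.
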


\begin{lemma}[Williamson \cite{williamson1977panconnected}]\label{Thm:panconnected}
    Let $G$ be a graph of order $n\geq 4$ such that $\delta(G)\geq (n+2)/2$ then $G$ is panconnected.
\end{lemma}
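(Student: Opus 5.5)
This is a classical result, and we only outline a self-contained route to it. Fix distinct $u,v\in V(G)$ and write $\delta=\delta(G)\ge (n+2)/2$. Any two vertices have at least $2\delta-(n-2)\ge 4$ common neighbours among the remaining vertices, so $d(u,v)\le 2$. It therefore suffices to produce a $u$–$v$ path of every length $\ell$ with $2\le \ell\le n-1$; length $1$ is needed only when $uv\in E(G)$. The plan is an induction on $\ell$: a common neighbour gives $\ell=2$, and from a $u$–$v$ path $P=x_0x_1\cdots x_\ell$ with $\ell<n-1$ we build one of length $\ell+1$.

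\textbf{Long paths.} Take $w\notin V(P)$. The vertex $w$ has at most $n-\ell-2$ neighbours off $P$, so it has at least $\delta-(n-\ell-2)$ neighbours on $P$. If no two of these are consecutive on $P$, then $w$ has at most $\lceil(\ell+1)/2\rceil$ neighbours on $P$. A direct computation shows this contradicts the lower bound once $\ell\ge n-4$. So for long paths some $w$ is adjacent to consecutive vertices $x_i,x_{i+1}$, and inserting $w$ between them gives the path of length $\ell+1$.

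\textbf{Short paths.} For $\ell$ well below $n$, the set $R=V(G)\setminus V(P)$ is large. Our first attempt is a local replacement: find consecutive $x_i,x_{i+1}$ on $P$ and an edge $ab$ inside $R$ with $x_ia,\,bx_{i+1}\in E(G)$, and replace the edge $x_ix_{i+1}$ by $x_iabx_{i+1}$. This gains $2$, not $1$. To fix the parity we combine it with a shortening step: if $x_{i}x_{i+2}\in E(G)$ for some $i$, delete $x_{i+1}$. Alternatively we replace a subpath $x_ix_{i+1}x_{i+2}$ by $x_iabx_{i+2}$ with $a,b\in R$, which gains exactly $1$.

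The degree counts behind this run as follows. Every vertex of $P$ has at least $\delta-\ell$ neighbours in $R$, and $|R|=n-\ell-1$. So any two vertices of $P$ have at least $2(\delta-\ell)-(n-\ell-1)>0$ common neighbours in $R$ when $\ell$ is small. Moreover, $G[R]$ has minimum degree at least $\delta-(\ell+1)$, which is more than half of $|R|$ in the relevant range. Hence by \Cref{Thm:pancyclic-Bondy1971}, $G[R]$ is highly connected and every vertex of $P$ sees most of it, which is what yields the required $a,b$.

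\textbf{Alternative via Hamilton-connectedness.} A cleaner global route is available. Delete $u$ and $v$ to get $G'=G-\{u,v\}$, which satisfies $\delta(G')\ge (n-2)/2=|G'|/2$. We look for a path in $G'$ on $\ell-1$ vertices whose ends are a neighbour of $u$ and a neighbour of $v$ (distinct neighbours, since $G$ is simple). By \Cref{Thm:pancyclic-Bondy1971}, $G'$ has cycles of every length, or is $K_{r,r}$, which is handled separately. Each of $u,v$ is adjacent to more than half of $G'$. So on a cycle of length $\ell-1$ one expects to find consecutive vertices $a\in N(u)$, $b\in N(v)$, and opening the cycle between them gives the path.

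\textbf{Main obstacle.} The hard part is the middle range of $\ell$. There neither the counting insertion (which needs $\ell\ge n-4$) nor the crude common-neighbour counts (which need $\ell$ small) apply, and a cycle of prescribed length in $G'$ may avoid $N(u)$ or $N(v)$. We would first try to prove that every vertex of a long path has many neighbours in $R$ and then apply a rotation/crossing argument of Pósa type. If that stalls, the fallback is the exact extremal counting used in Williamson's original argument, with $\delta\ge(n+2)/2$ being precisely the threshold at which the crossing counts close.
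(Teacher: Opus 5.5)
The paper gives no proof of this lemma; it quotes Williamson and uses the result as a black box. Your proposal promises a self-contained route but leaves the real content open. You defer the ``middle range'' of $\ell$ to Williamson's original counting, and with the bounds you actually have, that range is everything except the trivial ends.

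\begin{itemize}
\item \textbf{Short paths.} The count $2(\delta-\ell)-(n-\ell-1)=2\delta-n+1-\ell$ equals $3-\ell$ when $n$ is even and $\delta=(n+2)/2$, so it is positive only for $\ell\le 2$. The inequality $\delta-\ell-1>|R|/2=(n-\ell-1)/2$ is equivalent to $\ell<2\delta-n-1$, and at $\delta=\lceil(n+2)/2\rceil$ the right side is $1$ or $2$. So the claimed degree condition on $G[R]$ (and the appeal to \Cref{Thm:pancyclic-Bondy1971} there) fails for every $\ell\ge 2$.
\item \textbf{Long paths.} The insertion step needs $\delta-(n-\ell-2)>\lceil(\ell+1)/2\rceil$. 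For even $n$ and $\delta=(n+2)/2$ this holds only when $\ell\ge n-3$; at $\ell=n-4$ both sides equal $n/2-1$. It also presupposes a path of length $n-3$, which nothing in the proposal supplies.
\item \textbf{Alternative route.} Bondy's theorem gives cycles of each length in $G'$. Nothing forces a cycle of length $\ell-1$ to contain consecutive vertices $a\in N(u)$, $b\in N(v)$, so that step is unsupported. The $K_{r,r}$ case is also left untreated.
\item \textbf{Minor.} For adjacent $u,v$ the common-neighbour bound is $2\delta-n\ge 2$, not $2\delta-(n-2)$. This is harmless.
\end{itemize}

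The missing idea is small and sits inside your alternative route: use a single Hamiltonian cycle of $G'=G-\{u,v\}$ and look for a subpath of the right order, instead of a cycle of each length. Put $N=n-2$, $U=N(u)\setminus\{v\}$ and $W=N(v)\setminus\{u\}$. Then $U,W\subseteq V(G')$ and $|U|+|W|\ge 2\delta-2\ge n>N$.

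For $n\ge 5$ we have $\delta(G')\ge\delta-2\ge N/2$ with $N\ge 3$. So $G'$ has a Hamiltonian cycle $h_1h_2\cdots h_Nh_1$, by Dirac, or by \Cref{Thm:pancyclic-Bondy1971} since $K_{r,r}$ is Hamiltonian too. For $n=4$ the hypothesis forces $G=K_4$.

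Fix $k\in\{1,\dots,N\}$. The map $h_i\mapsto h_{i+k-1}$ (indices mod $N$) is a bijection of $V(G')$. Since $|U|+|W|>N$, there is an index $i$ with $h_i\in U$ and $h_{i+k-1}\in W$. Then $u\,h_ih_{i+1}\cdots h_{i+k-1}\,v$ is a $u$--$v$ path of length $k+1$.

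As $k$ runs through $1,\dots,N$, this gives every length from $2$ to $n-1$ at once, so in particular $d(u,v)\le 2$. Adding the edge $uv$ when $u\sim v$, this is panconnectedness, with no induction on $\ell$ and no middle-range analysis.
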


\begin{lemma}[Hu et al. \cite{hu2015bi-weakly}]\label{Thm:Bi-pancyclic_Hu}
    If $G = (V_1, V_2, E)$ is a bipartite graph with minimum degree at least $n/3 + 4$, where $n = \max \{|V_1|, |V_2|\}$, then
$G$ is a weakly bi-pancyclic graph with girth $4$.
\end{lemma}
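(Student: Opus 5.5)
The plan is to prove the two assertions separately: first that $G$ has girth $4$, and then that $G$ contains a cycle of every even length from $4$ up to its circumference $c(G)$. Throughout write $\delta=\delta(G)\ge n/3+4$. Then $|V_1|,|V_2|\ge \delta$, since each vertex of $V_2$ has at least $\delta$ neighbours in $V_1$, and vice versa.

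\emph{Girth $4$.} I would count cherries. A $C_4$ exists as soon as two vertices of $V_1$ share two common neighbours in $V_2$, which holds whenever
\[
\sum_{v\in V_1}\binom{d(v)}{2}>\binom{|V_2|}{2}.
\]
The left side is at least $\delta\binom{\delta}{2}\approx n^3/54$, and the right side is at most $\binom{n}{2}$. So the inequality holds once $n$ exceeds a small absolute constant. For the finitely many small $n$, note that $\delta\ge n/3+4>n/2$ when $n<24$. In that range any two vertices on the same side have at least two common neighbours, because $2\delta-|V_2|\ge 2$, and this again gives a $C_4$.

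\emph{Weak bi-pancyclicity.} I would use a cycle-shortening argument. Let $C=x_1x_2\cdots x_{2l}x_1$ be a cycle with $6\le 2l\le c(G)$. It suffices to show that $G$ then has a cycle of length exactly $2l-2$; downward induction from a longest cycle then yields all even lengths between $4$ and $c(G)$. Set $R=V(G)\setminus V(C)$ and split into two cases.

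\emph{Case 1: $C$ is long}, say $2l\ge 2n/3$. Then $|R|$ is small on each side, namely at most $n-l\le 2n/3$. Hence each $x_i$ has many chords on $C$: at least $\delta-|R\cap(\text{opposite side})|$ of them. I would show by pigeonhole that $C$ contains either a chord $x_ix_{i+3}$, or two crossing chords $x_ix_j$, $x_{i+1}x_{j+1}$ whose arcs combine into a cycle of length $2l-2$. The argument is the usual Bondy/Häggkvist-style count. For each $i$, consider the forbidden shifts: if $x_i\sim x_j$ then $x_{i+1}\not\sim x_{j+1}$, and so on. Summing chord degrees over consecutive pairs then exceeds the available positions once $\delta>n/3+O(1)$. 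The additive $+4$ is meant to absorb the boundary terms in this count.

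\emph{Case 2: $C$ is short or medium.} Here I would not shorten $C$ itself. Instead I would construct cycles of length $2l-2$ directly inside the dense part of $G$. Fix an edge $uv$ and look at $A=N(v)$ and $B=N(u)$, each of size at least $\delta$. Bipartite graphs of this density contain paths of every odd length up to roughly $2\delta$ between suitable vertices. I would prove this by greedily extending a path using $\delta>n/3$: from a path endpoint, at least $\delta-(\text{path length})/2$ neighbours remain unused. Closing such a path through $u,v$ gives all even lengths up to about $2\delta\ge 2n/3$, which covers every length not handled in Case 1.

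The main obstacle I expect is the transition between the cases. A cycle of intermediate length may have most neighbours of its vertices lying in $R$, so the chord count fails. At the same time, the greedy path construction only reliably reaches about $2\delta$. If the two ranges do not overlap cleanly, I would first try to rescue Case 1 by rerouting through $R$. Two consecutive vertices $x_i,x_{i+1}$ of $C$ cannot have neighbours $r,r'\in R$ with $r\sim r'$ forming a detour of the wrong length without producing a longer cycle. So, working with a longest cycle, $N(x_i)\cap R$ and $N(x_{i+1})\cap R$ are nearly independent. This is in the spirit of \Cref{Thm:Zhang_longest_cycle}, and it forces enough chords after all.
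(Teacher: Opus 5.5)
The paper does not prove this lemma. It is quoted from Hu et al.\ and used as a black box, so your argument has to stand on its own.

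\textbf{Girth $4$.} This part is correct. Since $|V_1|\ge\delta$ and $|V_2|\le n$, the inequality $\delta\binom{\delta}{2}>\binom{n}{2}$ suffices. It holds for every $n$: with $x=n/3$ one has $(x+4)^2(x+3)-9x^2=x^3+2x^2+40x+48>0$. You should drop the small-$n$ aside, because it is false as written. For $n=21$, $\delta=11$ and $|V_2|=21$ you only get $2\delta-|V_2|=1$ common neighbour.

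\textbf{Weak bi-pancyclicity.} Here there is a genuine gap; none of the four steps works at density $n/3$.
\begin{enumerate}
\item The shortening configuration is wrong. Chords $x_ix_j$ and $x_{i+1}x_{j+1}$ give the cycle $x_ix_jx_{j-1}\cdots x_{i+1}x_{j+1}x_{j+2}\cdots x_i$, whose length is exactly $2l$. To drop two vertices you need $x_ix_j$ and $x_{i+1}x_{j+3}$.
\item The count cannot close, and this is the more serious problem. A vertex $x_i$ is only guaranteed $\delta-2-(n-l)$ chords. This is vacuous unless $2l>2(n-\delta)+4\approx 4n/3$, so Case~1 says nothing for lengths in $[2n/3,4n/3]$. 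Even for a Hamiltonian cycle the count fails. Let $S_i$ be the set of indices $j$ with $x_ix_j$ a chord. Forbidding $j+3\in S_{i+1}$ whenever $j\in S_i$ only gives $|S_i|+|S_{i+1}|\le l$. That needs chord degrees above roughly $l/2\approx n/2$, while you have about $n/3$. The deficit is a constant factor in the main term, not a boundary term that the $+4$ could absorb.
\item Greedy extension in Case~2 produces a long path but gives no control over where it ends. The claimed $N(v)$--$N(u)$ paths of every odd length avoiding $u,v$ are the whole difficulty, and they are left unproved.
\item The rescue uses that $C$ is a longest cycle. That property is lost after the first step of your downward induction. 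Even for a longest cycle it does not help. There are no edges between $N(x_i)\cap R$ and $N(x_{i+1})\cap R$, but applied to any $r\in N(x_i)\cap R$ this only gives $|N(x_{i+1})\cap R|\le n-\delta$. That guarantees at least $2\delta-n-2<0$ chords at $x_{i+1}$, which is nothing.
\end{enumerate}

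What is missing is a genuinely global structural argument, of the kind carried out by Hu et al.\ (and by Brandt et al.\ for the non-bipartite $n/4$ analogue). Local chord counting of the type you propose only works at density near $n/2$.
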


\begin{lemma}[Brandt et al. \cite{brandt1998weakly}]\label{Thm:Weakpan-Brandt_nby4_250_1}
    Let $G$ be a $2$-connected non-bipartite graph of order $n$ with minimum degree $\delta(G) \geq  n/4 + 250$. Then $G$ is weakly pancyclic unless $G$ has odd girth $7$, in which case it has every cycle from $4$ up to its circumference except the $5$-cycle.
\end{lemma}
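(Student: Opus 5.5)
Since this lemma is quoted from Brandt, Faudree and Goddard, the paper uses it as a black box. If I had to prove it, I would follow the usual weakly-pancyclic strategy: fix a longest cycle, then produce every cycle length from the girth up to the circumference $c=c(G)$ by covering three ranges of lengths separately.

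\textbf{Step 1 (girth and odd girth).} The condition $\delta(G)\ge n/4+250$ far exceeds $\sqrt n$, so a count of common neighbours over pairs of vertices gives a $4$-cycle. Hence the girth is at most $4$ whenever $C_4$ exists, and the triangle-free case has girth exactly $4$. For the odd girth $k$, take a shortest odd cycle $C$. Minimality implies that every vertex outside $C$ has at most $2$ neighbours on $C$, and vertices of $C$ have no chords. Summing degrees over $V(C)$ then gives roughly $k\,\delta\le 2n$. With $\delta>n/4$ this forces $k\le 7$. The boundary case $k=7$ is exactly the exceptional case of the statement, where $C_5$ cannot exist because it would have odd girth at most $5$.

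\textbf{Step 2 (short cycles).} I would build all lengths from $4$ up to about $n/4$ by hand. Start from a shortest odd cycle (length $3$, $5$ or $7$) and a $C_4$. Then extend paths greedily through vertices with large common neighbourhoods. Since every vertex sees more than $n/4$ others, two adjacent vertices on a short cycle can be rerouted through a path of any prescribed small length. Such a path is found inside the dense bipartite-like neighbourhoods by an averaging or pigeonhole argument. The additive $250$ is what buys enough slack for these counting arguments to close. This step gives every even length and every odd length at least the odd girth, up to a linear bound, and in the odd-girth-$7$ case it gives everything except $5$.

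\textbf{Step 3 (long cycles by shortening).} For lengths $\ell$ between about $n/4$ and $c$, I would show that a cycle $C$ of length $\ell$ yields a cycle of length $\ell-1$ (or both $\ell-1$ and $\ell-2$). The tools are chord and hop structure in the style of Bondy's pancyclicity argument together with \Cref{Thm:Zhang_longest_cycle}. If many vertices of $C$ have neighbours off $C$ or chords of small span, one finds a chord pattern $v_iv_{j}$, $v_{i+1}v_{j+2}$ (or a two-edge detour through an outside vertex) that drops exactly one vertex. Otherwise the degree condition forces most neighbours of $C$-vertices to lie on $C$ with a nearly bipartite pattern. Non-bipartiteness, together with $2$-connectivity to attach an odd cycle to $C$, then gives a parity switch. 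Iterating downward from $c$ covers the whole upper range and joins with Step 2.

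\textbf{Main obstacle.} I expect the hardest part to be the transitional middle range in Step 3, where the graph is locally nearly bipartite. Consecutive shortenings there only produce even decrements, and one must import an odd cycle from far away via two disjoint paths ($2$-connectivity) without losing control of the length. My first attempt would be a stability dichotomy: either $G$ is far from bipartite, in which case many short odd chords exist and shortening by $1$ works directly; or $G$ is close to a blow-up of $C_{\text{odd}}$ or of a bipartite graph. In the close case I would analyse that structure explicitly, and this is where the odd-girth-$7$ exception (blow-ups of $C_7$ with $\delta\approx 2n/7$) must be isolated.
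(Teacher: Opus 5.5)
The paper gives no proof of this lemma. It is imported from Brandt, Faudree and Goddard, so within the paper the citation is the entire argument, as your first sentence notes.

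Your Step 1 is correct and complete. The inequality $\sum_v\binom{d(v)}{2}\ge n\binom{\delta}{2}>\binom{n}{2}$ yields a $C_4$. For a shortest odd cycle $C$ of length $k\ge 5$, $C$ is chordless and each outside vertex has at most two neighbours on it, so $k\delta\le 2k+2(n-k)=2n$ and $k\le 7$. That is exactly why odd girth $7$ is the only exception.

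Everything after Step 1 outlines where a proof would have to go; it is not a proof. The mechanisms you propose for Steps 2 and 3, which carry the whole content of the Brandt--Faudree--Goddard theorem, fail as stated.

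In Step 2, two adjacent vertices cannot in general be joined by paths of every small length. In a balanced blow-up of $C_7$, which meets all the hypotheses once $n\ge 7000$, adjacent vertices are joined by no path of length $2$ or $4$. What you actually need is the parity-respecting version: producing $C_5,C_7,\dots$ from a triangle, or $C_7,C_9,\dots$ from a $C_5$, up to linear length. That is precisely where non-bipartiteness must be used quantitatively, and you give no argument for it.

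In Step 3, the downward iteration breaks after its first step. The structure you rely on (\Cref{Thm:Zhang_longest_cycle}, and the fact that no outside vertex sees two consecutive vertices of $C$) is available only for a longest cycle. For a cycle of length $\ell\le n-\delta$, every vertex of $C$ may have all neighbours except its two cycle-neighbours off $C$. Neither branch of your dichotomy then produces $C_{\ell-1}$:
\begin{itemize}
\item Many edges leaving $C$ do not give an outside vertex with two neighbours at distance exactly $3$ along $C$, nor a crossing pair $v_iv_j$, $v_{i+1}v_{j+2}$. If the part of $G$ around $C$ is bipartite, both configurations are ruled out by parity.
\item If instead most neighbours lie on $C$, a Bondy-type argument needs $e(G[C])\ge \ell^2/4$. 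The bound $\delta\ge n/4+250$ supplies this only for $\ell$ up to about $n/2$.
\end{itemize}

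Importing an odd cycle through two disjoint paths (via $2$-connectivity) gives a cycle of uncontrolled length. The stability dichotomy you offer as a remedy has no explicit structure behind it when the odd girth is $3$ or $5$. Hajnal's construction gives triangle-free graphs with $\delta>(1/3-\varepsilon)n$ and arbitrarily large chromatic number, so at this density there is no bounded family of blow-ups to analyse. Only for odd girth at least $7$ does $\delta>n/4$ force a homomorphism to $C_7$, by H\"aggkvist and Jin. As it stands, the lemma is established only by the citation, exactly as in the paper.
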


The following result proves that a graph of sufficiently large order with a high degree condition contains either a triangle or a four-cycle. For the detailed proof, one can refer to \cite{gupta2025study}.

\begin{lemma}[Lin et al. \cite{lin2021large}, Gupta \cite{gupta2025study}] \label{thm:upperboundgirth}
 Let $G$ be a graph on $n$ vertices with $\delta(G)\geq cn$ and $n\geq 10/c^2$ for some $c>0$, then $g(G) \leq  4$.
\end{lemma}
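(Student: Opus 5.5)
The statement to prove is \Cref{thm:upperboundgirth}: if $\delta(G)\geq cn$ and $n\geq 10/c^2$, then $G$ has a triangle or a $4$-cycle. I will argue by contradiction and count paths of length two (cherries). Suppose $g(G)\geq 5$, so $G$ has neither $C_3$ nor $C_4$. The absence of $C_4$ means that any two distinct vertices $x,y$ have at most one common neighbour. A cherry is a path $x-v-y$ with $x\neq y$, centred at $v$. Each unordered pair $\{x,y\}$ is then the pair of endpoints of at most one cherry.

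Double counting cherries gives
$$\sum_{v\in V(G)}\binom{d_G(v)}{2}\;\leq\;\binom{n}{2}.$$
Since $d_G(v)\geq \delta(G)\geq cn$, the left side is at least $n\binom{\lceil cn\rceil}{2}\geq n\cdot \frac{cn(cn-1)}{2}$. Combining the two bounds yields $c n (cn-1)\leq n-1<n$, that is, $c^2n^2-cn<n$. This rearranges to $n< (1+c)/c^2$.

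We may assume $c\leq 1$: otherwise $\delta(G)\geq cn>n-1$ is impossible, since a simple graph has $\delta(G)\le n-1$ and $n\geq 1$. Then $(1+c)/c^2\leq 2/c^2<10/c^2\leq n$, which contradicts the bound above. Hence $G$ contains a $C_3$ or a $C_4$, i.e.\ $g(G)\leq 4$. (If $G$ has no cycle at all, the girth is $\infty$, but that case is covered because the contradiction assumed only that there is no $C_3$ and no $C_4$.)

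A more careful version of the argument could use the Moore-type bound $n\geq 1+\delta^2$ for graphs of girth at least $5$. This follows from counting the vertex $v$, its $\delta$ neighbours, and their $\delta(\delta-1)$ second neighbours, which are all distinct when $g\geq 5$. It gives $n\geq 1+c^2n^2$, so $n<1/c^2$, which is again a contradiction. Either route works. The only mild care needed is the rounding of $cn$ and the degenerate range of $c$. The constant $10$ is far larger than needed, so there is no real obstacle.
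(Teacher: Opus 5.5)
Your proof is correct and complete. The double count $\sum_{v}\binom{d_G(v)}{2}\le\binom{n}{2}$ for a graph without $C_4$ is sound, and so is the lower bound through $d_G(v)\ge\lceil cn\rceil$, with the rounding handled properly. The resulting inequality $n<(1+c)/c^2$ and the disposal of the vacuous range $c>1$ also check out.

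The paper itself gives no proof of this lemma. It only quotes the result from \cite{lin2021large} and defers to \cite{gupta2025study}, so there is no in-paper argument to compare with. Your cherry count, which is the standard double count behind the K\H{o}v\'ari--S\'os--Tur\'an bound for $C_4$-free graphs, therefore supplies a self-contained proof.

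One small point on your two routes. The double count never uses the absence of triangles, so it proves the stronger conclusion $G\supseteq C_4$ as soon as $n\ge(1+c)/c^2$. The Moore-type count $n\ge 1+\delta(G)^2$ needs both $C_3$ and $C_4$ excluded, but in exchange it gives the slightly better threshold $n\ge 1/c^2$ for $g(G)\le 4$. Either way the constant $10$ has a lot of slack. In the paper's application with $c=1/4$, your bound needs only $n\ge 20$ instead of $n\ge 160$.
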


Combining  \Cref{thm:upperboundgirth,Thm:Weakpan-Brandt_nby4_250_1}, we obtain the following lemma.

\begin{lemma}\label{Thm:Weakpan-Brandt_nby4_250}
    Let $G$ be a $2$-connected non-bipartite graph of order $n\geq 160$ with minimum degree $\delta(G) \geq  n/4 + 250$. Then $G$ is weakly pancyclic with the girth at most $4$ unless $G$ has odd girth $7$, in which case it has every cycle from $4$ up to its circumference except the $5$-cycle.
\end{lemma}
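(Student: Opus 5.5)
The statement to prove is \Cref{Thm:Weakpan-Brandt_nby4_250}. It follows from \Cref{Thm:Weakpan-Brandt_nby4_250_1}, with \Cref{thm:upperboundgirth} used only to control the girth. The plan is to apply \Cref{Thm:Weakpan-Brandt_nby4_250_1} directly to $G$. Its hypotheses are exactly ours minus the order condition: $G$ is $2$-connected, non-bipartite, and has $\delta(G)\ge n/4+250$. This yields the dichotomy. Either $G$ is weakly pancyclic, or $G$ has odd girth $7$ and contains every cycle length from $4$ up to $c(G)$ except $5$. The second alternative is already exactly the exceptional case in the statement, so nothing more is needed there.

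In the first alternative, it remains to show that $g(G)\le 4$. I would apply \Cref{thm:upperboundgirth} with $c=1/4$. We have $\delta(G)\ge n/4+250\ge n/4=cn$. The order condition required is $n\ge 10/c^2=160$, which is precisely the assumption $n\ge 160$. Hence $g(G)\le 4$, and in this case $G$ is weakly pancyclic with girth at most $4$, as claimed.

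In fact, in the weakly pancyclic case this also gives cycles of every length from $g(G)\le 4$ up to $c(G)$. The only point needing any care is to check that the constant in \Cref{thm:upperboundgirth} with $c=1/4$ gives exactly the threshold $160$; it does, since $10/(1/4)^2=160$. I expect no real obstacle. At most one remark is needed: if $n$ is small enough that $n/4+250>n-1$, the degree hypothesis cannot hold, so the statement is vacuous there. This does not affect the argument.
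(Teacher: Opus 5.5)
Your proposal is correct and matches the paper's approach. The paper obtains this lemma simply by combining the theorem of Brandt et al.\ with the girth lemma at $c=1/4$, for which $10/c^2=160$ gives exactly the order threshold.
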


\section{Main Results}
Before proceeding to the main results, we introduce a lemma that guarantees a long cycle in a $2$-connected graph under a suitable neighborhood condition. This lemma will serve as one of the key tools in the proofs of our main results. 

\begin{lemma}\label{lem:cycle_lemma}
Let $G$ be a $2$-connected graph on $n$ vertices such that
\[
|(N_G(u)\cup N_G(v))\setminus\{u,v\}|\geq k
\]
for all $u,v\in V(G)$. If $c(G)\geq n-k$, then
\[
c(G)\geq \min\{2k-2, n-1\}.
\]
\end{lemma}
\begin{proof}Let $C_\ell$ be a longest cycle in $G$ of length $\ell$, and let $X := V(G)\setminus V(C_\ell)$. 
If $|X|\leq 1$, then $\ell \geq n-1$, and we are done. Hence, assume that $|X|\geq 2$.
Suppose first that $G[X]$ is an empty graph. Then for every $u,v\in X$, we have
\[
|(N_G(u)\cup N_G(v))\cap V(C_\ell)|\geq k.
\]
Therefore, by \Cref{Thm:Zhang_longest_cycle}, it follows that $\ell\geq 2k-2$, as desired. Thus, we may assume that $G[X]$ is not an empty graph. We now claim the following:

\textbf{Claim. }\emph{There exists a longest path $P_s = x_1 x_2 \ldots x_s$ in ${G}[X]$ such that there exist distinct vertices $c_1, c_s \in V(C_\ell)$ with $x_1 \sim c_1$ and $x_s \sim c_s$.}

\emph{Proof of Claim.}   
Let $P_s = x_1 x_2 \ldots x_s$ be a longest path in ${G}[X]$. Clearly, at least one of $x_1$ or $x_s$ has a neighbor in $V(C_\ell)$, otherwise 
\begin{eqnarray*}
    |(N_G(x_1)\cup N_G(x_s))\setminus\{x_1, x_s\}|&\leq& n-2-\ell\\
    &\leq& n-2-(n-k)\\
    &=& k-2,
\end{eqnarray*}
a contradiction. Therefore without loss of generality, assume that $x_1$ is adjacent to some $c_1 \in V(C_\ell)$. If $x_s$ also has a neighbor in $V(C_\ell)\setminus\{c_1\}$, then we are done; hence assume that $x_s$ has no neighbor in $V(C_\ell)\setminus\{c_1\}$.

Since $P_s$ is a longest path, $x_s$ cannot be adjacent to any vertex in $X \setminus V(P_s)$. As ${G}$ is $2$-connected, $x_s$ must be adjacent to at least two vertices of the path; in particular, suppose that $x_s$ is adjacent to $x_{s'}$ for some $s' \le s-2$. Note that $x_{s'+1}$ must have a neighbor in $V(C_\ell)\setminus\{c_1\}$, since otherwise 
\begin{eqnarray*}
    |(N_G(x_s)\cup N_G(x_{s'+1}))\setminus\{x_s, x_{s'+1}\}|&\leq& n-2-(\ell-1)\\ 
    &\leq& k-1,
\end{eqnarray*}
again a contradiction. Now consider the path
\[
P'_s = x_1 \ldots x_{s'}\, x_s\, x_{s-1} \ldots x_{s'+1}.
\]
Note that this is also a path of length $s$ and both endpoints of $P'_s$ have neighbors in $V(C_\ell)$, completing the proof of the claim.

Now assume $P_s=x_1 x_2\ldots x_s$ be a longest path in ${G}[X]$ such that $x_1$ and $x_s$ each have at least one neighbor in $V(C_\ell)$. Since $x_1$ and $x_s$ can be adjacent only to the vertices on the the $P_s$ in $G[X]$, therefore we have
\begin{equation}\label{eqn:KnbronC_l}
    |(N_{G}(x_1) \cup N_{G}(x_s))\cap V(C_{\ell})|\geq k-s.
\end{equation}
We can choose two non-empty sets $A\subseteq N_{G}(x_1) \cap V(C_{\ell})$ and $B\subseteq N_{G}(x_s) \cap V(C_{\ell})$ such that $A\cap B=\phi$ and $|A|+|B|\geq k-s$, by \Cref{eqn:KnbronC_l}. 
 We can define pairwise disjoint subsets
\[
A_1, B_1, \dots, A_q, B_q \subseteq V(C_{\ell})
\]
such that each \( A_i \) and \( B_i \) induces a set of consecutive vertices on \( C_\ell \), the endpoints of \( A_i \) belong to \( A \), the endpoints of \( B_i \) belong to \( B \), and \( A_i \cap B = \phi \), \( B_i \cap A = \phi \) for all \( i \). Let $a_i=|A_i\cap A|$ and $b_i=|B_i\cap B|$ for each $i\in \{1,\ldots,q\}$. Therefore, $\sum_{i=1}^qa_i=|A|$ and $\sum_{i=1}^qb_i=|B|$.

Note that no two elements of $A$ or $B$ can be consecutive on $V(C_{\ell})$ otherwise we have a larger cycle. Thus, $|A_i|\geq 2a_i-1$ and $|B_i|\geq 2b_i-1$ for all $i$. Similarly, for any $a\in A$ and $b\in B$, the circular distance between $a$ and $b$ on $C_{\ell}$ is at least $s+1$.
 Therefore,  
\begin{eqnarray*}
    \ell &\geq& \sum_{i=1}^{q}\left(|A_i|+|B_i|\right)+2qs\\
    &\geq& \sum_{i=1}^q
\left[\left(2a_i-1\right)+\left(2b_i-1\right)\right]+2qs\\
&\geq& 2(|A|+|B|)-2q+2qs\\
&\geq& 2(k-s)+2q(s-1)\\
&\geq& 2k-2 \text{  }\;\;\;\; (\text{since }q\geq 1 \text{ and } s\geq 1).
\end{eqnarray*}
\end{proof}

\setcounter{theorem}{\getrefnumber{thm:main_result11}-1}
\begin{theorem}
For $n\geq 4511$ and even $m\in [n+1, 2n-4008],$
    $$R(K_{2,n},C_m)=2n.$$
\end{theorem}

\begin{proof}
To prove the lower bound, consider the graph $G=K_{n-1,n-1}\sqcup K_1$. Clearly, $G\nsupseteq K_{2,n}$ and note that any cycle in $\overline{G}$ has order at most $n$. Therefore, $\overline{G}\nsupseteq C_m$ for any $m\geq n+1$.

 To prove the upper bound, we first let $n\geq 4511$ and fix the parameter as $m\in [n+1,2n-4008]$ with $m$ even. Suppose on contrary, there exists a graph $G$ on $2n$ vertices such that $G\nsupseteq K_{2,n}$ and $\overline{G}\nsupseteq C_m$. We divide the proof into two cases depending on the minimum degree of $\overline{G}$. 

\textbf{Case 1. } $\delta(\overline{G})\geq \lceil \frac{n}{2}\rceil+250.$

First, we prove that $\overline{G}$ is $2$-connected. If not, let  $\kappa(\overline{G})=0$. Then we can partition $V(\overline{G})$ as $V(\overline{G})=A\sqcup B$, where $A$ is the set of vertices of the smallest connected component of $\overline{G}$. Since $|A|\geq \delta(\overline{G})+1>2$ and $|B|\geq n$, we get a $K_{2,n}$ in $G$, a contradiction. Thus $\kappa(\overline{G})\neq 0$. Using a similar argument, it also follows that $\kappa(\overline{G})\neq 1$.

Since $G\nsupseteq K_{2,n}$, for all $u,v\in V(\overline{G})$, we have
\begin{equation}\label{eqn:K_2nFree_equation}
    |(N_{\overline{G}}(u)\cup N_{\overline{G}}(v))\setminus\{u,v\}|\geq n-1.
\end{equation}
Also, since $\overline{G}$ is $2$-connected and $\delta(\overline{G})\geq \lceil {n}/{2}\rceil+250$, therefore by \Cref{Dirac}
\begin{equation}\label{eqn:mindegree_first}
    c(\overline{G})\geq n+500>2n-(n-1).
\end{equation}
Together with \Cref{eqn:K_2nFree_equation}, \Cref{eqn:mindegree_first} and \Cref{lem:cycle_lemma}, we get
\begin{equation}\label{eqn:mindegree_second}
    c(\overline{G})\geq 2n-4.
\end{equation}

Next, we prove that $\overline{G}$ cannot be bipartite. Suppose, on contrary, $\overline{G}$ is bipartite. Consider the bipartition $V(\overline{G})=A\sqcup B$ such that $|A|\leq |B|$. Clearly $n\leq |B|\leq n+1$, since if $|B|\geq n+2$ then $G\supseteq K_{2,n}$. Since, we have 
$$\delta(\overline{G})\geq \bigg\lceil \frac{n}{2}\bigg\rceil +250\geq \frac{n+1}{3}+4,$$
and also, $\overline{G}$ is $2$-connected, therefore by \Cref{Thm:Bi-pancyclic_Hu}, $\overline{G}$ is weakly bi-pancyclic of girth 4. This implies $\overline{G}\supseteq C_m$ as we have  $c(\overline{G})\geq 2n-4$, a contradiction.

So far we have proved that $\overline{G}$ is a 2-connected, non-bipartite graph with circumference $c(\overline{G})\geq 2n-4$ such that 
$$\delta(\overline{G})\geq \bigg\lceil\frac{n}{2}\bigg\rceil +250\geq  \frac{|V(\overline{G})|}{4}+250.$$
Therefore, by \Cref{Thm:Weakpan-Brandt_nby4_250} $\overline{G}\supseteq C_m$ for all $m\in \{n+1, \ldots, 2n-4\}$, a contradiction. Hence, this case is not possible.

\textbf{Case 2. } $\delta(\overline{G})\leq \lceil \frac{n}{2}\rceil+249.$

In this case, we have
\begin{equation*}
    \Delta(G)\geq 2n-1-\bigg\lceil\frac{n}{2}\bigg\rceil-249
    =\bigg\lfloor\frac{3n}{2}\bigg\rfloor-250.
\end{equation*}
Consider a vertex $v$ such that $|N_G(v)|\geq \lfloor3n/2\rfloor-250$ and choose $X\subseteq N_G(v)$ such that $|X|=\lfloor3n/2\rfloor-250$ and let $Y:=V(G)\setminus (X\cup \{v\})$.
Note that for all $x\in X$, we have $|N_{G}(x)\cap X|\leq n-1$, this implies 
\begin{equation}\label{eqn:claim3_first}
    |N_{\overline{G}}(x)\cap X|\geq \bigg\lfloor \frac{n}{2}\bigg\rfloor-250\;\;\;\;\text{for all }x\in X.
\end{equation}
 Similarly 
 \begin{equation}\label{eqn:claim3_second}
     |N_{\overline{G}}(y)\cap X|\geq\bigg\lfloor \frac{n}{2}\bigg\rfloor-249 \;\;\;\;\text{for all }y\in Y.
 \end{equation}
 Consider $Y'\subseteq Y$ such that $|Y'|=\lceil n/2\rceil-1752$ and take $X'=X\sqcup Y'$. Now consider the graph $\overline{G}[X']$ on $2n-2002$ vertices. By \Cref{eqn:claim3_first} and \Cref{eqn:claim3_second}, we have 
 \begin{equation}\label{eqn:min_degree_Comp_G[X']}
     \delta(\overline{G}[X'])\geq \bigg\lfloor\frac{n}{2}\bigg\rfloor  -250\geq\frac{|X'|}{4}+250.
 \end{equation}
 We now aim to apply \Cref{Thm:circumference_3connectedGraphs} on $\overline{G}[X']$. For that, we need to prove the following claim.

  \begin{claim}\label{sub:claim:2a:thm1}
     $\overline{G}[X']$ is $3$-connected.
 \end{claim}
\emph{Proof of $\Cref{sub:claim:2a:thm1}$.} \textbf{Case 3.1(a).} First suppose $\overline{G}[X']$ is disconnected. Note that $\overline{G}[X']$ can not have more than two connected components since if it has more than two connected components, then the smallest connected component has size at most $(2n-2002)/3$. Since each component contains at least $ \delta(\overline{G}[X'])+1$ many vertices, we can choose two vertices from the smallest component such that they will have at least $(2n-2002)- (2n-2002)/3=(4n-4004)/3\geq n$ common non-neighbors in $\overline{G}[X']$ for $n\geq 4004$, a contradiction. Therefore consider the partition $X'=A\sqcup B$ with $|A|\leq |B|$. Note that $|A|\geq \delta(\overline{G}[X'])+1\geq 2$ and hence $|B|\leq n-1$, otherwise we have two vertices having $n$ non-neighbors in $\overline{G}$, but this implies $|A|\geq n-2001$. Further, we revise the bounds as follows 
\begin{eqnarray*}
    n-2000 \leq &|A|&\leq n-1001\\
    n-1001\leq &|B|&\leq n-2,
\end{eqnarray*}

since if $|B|= n-1$, then we can choose two vertices from $A$, both non-adjacent to $v$ in $\overline{G}$ such that they have at least $|B|+|\{v\}|= n$ common non-neighbors in $\overline{G}$, a contradiction.  We can choose such vertices in $A$ since at least $\lfloor{3n}/{2}\rfloor-250$ many non-neighbors of $v$ in $\overline{G}$ are distributed inside $A\sqcup B$ and $|B|= n-1$.
Now, we consider 
\begin{eqnarray*}
    |A|&=&n-1001-k,\\
    |B|&=&n-1001+k, \;\;\;\; \text{ for some } 0\leq k \leq 999.
\end{eqnarray*}

As $\delta(\overline{G}[A])=\delta(\overline{G}[X'])\geq \lfloor n/2\rfloor -250 > (|A|+2)/2$, therefore by \Cref{Thm:panconnected}, $\overline{G}[A]$ is panconnected. With such a degree condition, we can also conclude that diam$(\overline{G}[A])\leq 2$. Now we claim that $\overline{G}[B]$ is Hamiltonian. We have already seen that $\overline{G}[B]$ is not disconnected. Now suppose $\overline{G}[B]$ has a cut vertex, say $w$, and let $B\setminus\{w\}=B_1\sqcup B_2$. Any two vertices in the smallest component have at least $(n-1001-k)+(n-1002+k)/2\geq n$ common non-neighbors in $\overline{G}[X']$ if $n\geq 4003$, a contradiction. Therefore $\overline{G}[B]$ is 2-connected. 
 Now if the independence number $\alpha(\overline{G}[B])\geq \lfloor n/2\rfloor -249$ then any two vertices from the largest independent set of $\overline{G}[B]$ have at least $n-1001-k+\lfloor n/2\rfloor -251\geq n$ common non-neighbors for $n\geq 4503$. Thus, we have 
$$\delta(\overline{G}[B])\geq \max\left\{\frac{|B|+2}{3},\alpha(\overline{G}[B]) \right\},$$
and consequently by \Cref{Thm:Nash-william}, $\overline{G}[B]$ is Hamiltonian.

Recall that $|Y\setminus Y'|=2001$. Now, we prove that there exists a $y\in Y\setminus Y'$ such that $y\sim a$ and $y\sim b$ for some $a\in A$ and $b\in B$. If not, let us assume that for every $y\in Y\setminus Y'$, $y$ is either adjacent to vertices of $A$ only or the vertices of $B$ only, outside $Y\setminus Y'$. Therefore, we can partition $Y\setminus Y'$ as $Y\setminus Y'=Y_A\sqcup Y_B$, where $Y_A$ (respectively, $Y_B$) is the set of vertices in $Y\setminus Y'$ that are adjacent to vertices of $A$ (respectively, $B$) only, outside of $Y\setminus Y'$.

If $|Y_B|\geq 1001-k$, then for all $a,a'\in A$ we have 
$$|N_{G}(a)\cap N_{G}(a')|\geq |B|+|Y_B|\geq n,$$
which is a contradiction, so  $|Y_B|\leq 1000-k$. However, in this case, $|Y_A|\geq 1001+k$ and consequently for all $b,b'\in B$ we have
$$|N_{G}(b)\cap N_{G}(b')|\geq |A|+|Y_A|\geq n,$$
again a contradiction. Therefore there exists a $y\in Y\setminus Y'$ such that $y\sim a$ and $y\sim b$ for some $a\in A$ and $b\in B$.
\begin{figure}
    \centering
   \begin{tikzpicture}[x=0.8cm,y=0.8cm]
	\begin{pgfonlayer}{nodelayer}
		\node [style=none] (0) at (-2, 2) {};
		\node [style=none] (1) at (0.5, 2.75) {};
		\node [style=none] (2) at (1, 0.25) {};
		\node [style=none] (3) at (-1.5, -0.75) {};
		\node [style=none] (4) at (7.75, 2.75) {};
		\node [style=none] (5) at (4.5, 3.5) {};
		\node [style=none] (6) at (3.5, 1) {};
		\node [style=none] (7) at (7.25, -0.25) {};
		\node [style=none] (8) at (4.5, -2.25) {};
		\node [style=none] (9) at (4.5, -4) {};
		\node [style=none] (10) at (0.5, -2.25) {};
		\node [style=none] (11) at (0.5, -4) {};
		%\node [style=none] (12) at (5.25, -3.25) {2000 vertices};
		\node [style=none] (13) at (2.5, -4.5) {$Y\setminus Y'$};
		\node [style=none] (14) at (-0.5, 3.4) {$n-1001-k$};
		\node [style=none] (15) at (6, 4.25) {$n-1001+k$};
		\node [style=none] (16) at (-3.25, 0.75) {$\overline{G}[A]$};
		\node [style=none] (17) at (9, 1) {$\overline{G}[B]$};
		\node [style=Doty,scale=0.65] (18) at (5, 1.75) {};
		\node [style=Doty,scale=0.65] (19) at (6, 0.25) {};
		\node [style=Doty,scale=0.65] (20) at (0, 1) {};
		\node [style=Doty,scale=0.65] (21) at (-1, -0.25) {};
		\node [style=Doty,scale=0.65] (22) at (2.5, -2.75) {};
		\node [style=Doty,scale=0.65] (23) at (2.3, -3.5) {};
		\node [style=none] (24) at (4.5, 1.75) {$b$};
		\node [style=none] (25) at (6.5, 0) {$b'$};
		\node [style=none] (26) at (0.5, 1) {$a$};
		\node [style=none] (27) at (-1.5, -0.25) {$a'$};
		\node [style=none] (28) at (2, -2.75) {$y$};
		\node [style=none] (29) at (2.8, -3.5) {$y'$};
		\node [style=none] (30) at (-0.5, 1.75) {};
		\node [style=none] (31) at (-1.75, 1.25) {};
		\node [style=none] (32) at (5.25, 3) {};
		\node [style=none] (33) at (6.75, 2.5) {};
		\node [style=none] (34) at (7.25, 1) {};
	\end{pgfonlayer}
	\begin{pgfonlayer}{edgelayer}
		\draw (8.center) to (10.center);
		\draw (10.center) to (11.center);
		\draw (11.center) to (9.center);
		\draw (9.center) to (8.center);
		\draw [bend right=45, looseness=1.25] (6.center) to (7.center);
		\draw [bend right=45, looseness=1.25] (7.center) to (4.center);
		\draw [bend right, looseness=1.25] (4.center) to (5.center);
		\draw [bend right=45] (5.center) to (6.center);
		\draw [bend left=45] (1.center) to (2.center);
		\draw [bend left=45] (2.center) to (3.center);
		\draw [bend left=45, looseness=1.50] (3.center) to (0.center);
		\draw [bend left=45] (0.center) to (1.center);
		\draw [line width=1.1pt](18) to (22);
		\draw [line width=1.1pt](22) to (20);
		\draw [line width=1.1pt](19) to (23);
		\draw [line width=1.1pt](23) to (21);
		\draw [in=-90, out=75, looseness=1.75,line width=1.1pt,dashed] (21) to (31.center);
		\draw [in=-120, out=75, looseness=2.00,line width=1.1pt,dashed] (31.center) to (30.center);
		\draw [bend left=105, looseness=2.25,line width=1.1pt,dashed] (30.center) to (20);
		\draw [in=-150, out=45, looseness=2.00,line width=1.1pt,dashed] (18) to (32.center);
		\draw [in=-135, out=30, looseness=1.50,line width=1.1pt,dashed] (32.center) to (33.center);
		\draw [in=60, out=45,line width=1.1pt,dashed] (33.center) to (34.center);
		\draw [in=75, out=-120, looseness=2.25,line width=1.1pt,dashed] (34.center) to (19);
	\end{pgfonlayer}
\end{tikzpicture}

    \caption{Existence of $C_m$ in $\overline{G}$ when $\overline{G}[X']$ is disconnected}
    \label{fig:existenceOfCycle0}
\end{figure}
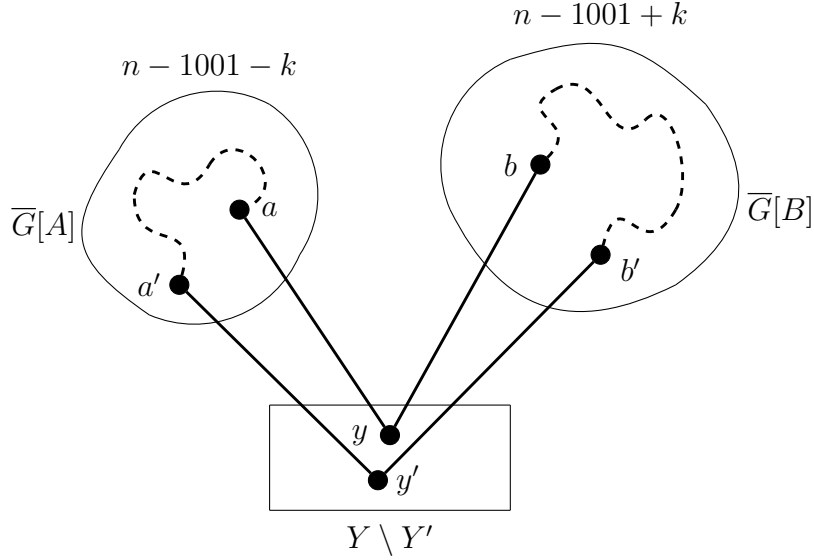

Now we claim that there exists a vertex $y'\in Y\setminus Y'$ such that $y'\neq y$, $y'\sim a'$ and $y'\sim b'$ for some $a'\in A\setminus \{a\}$ and $b'\in B\setminus \{b\}$ (see \Cref{fig:existenceOfCycle0}). Note that for any two vertices $b_1,b_2\in B$, we have 
\begin{equation}\label{eqn:b1b2_nonneighbor_in_Z}
    |\{N_{\overline{G}}(b_1) \cup N_{\overline{G}}(b_2)\}\cap Y\setminus Y'|\geq 1001-k,
\end{equation}
otherwise $|N_G(b_1)\cap N_G(b_2)|\geq n$. Now as $\overline{G}[B]$ is Hamiltonian, choose two vertices $b_1$ and $b_2$ on a Hamiltonian cycle such that the length of the path from $b$ to $b_1$ along that cycle in the clockwise direction is equal to the length of the path from $b$ to $b_2$ along the cycle in the counterclockwise direction. By \Cref{eqn:b1b2_nonneighbor_in_Z}, we have 

\begin{equation}\label{eqn:b1b2}
    |\{N_{\overline{G}}(b_1) \cup N_{\overline{G}}(b_2)\}\cap \{(Y\setminus Y')\setminus \{y\}\}|\geq 1000-k.
\end{equation}

Set $Z=\{N_{\overline{G}}(b_1) \cup N_{\overline{G}}(b_2)\}\cap \{(Y\setminus Y')\setminus \{y\}\}$. Now we claim that there exists $y'\in Z$ such that $y'\sim a'$ for some $a'\in A\setminus\{a\}$. If not, then for all  $a'\in A\setminus\{a\}$, $a'\nsim y'$ for all $y'\in Z$. In this case, there exist $a_1,a_2\in A\setminus\{a\}$ such that  
$$|N_G(a_1)\cap N_G(a_2)|\geq |B|+|Z|+|\{v\}|\geq n,$$
a contradiction. Therefore there exists $y'\in Z$ such that $y'\sim a'$ for some $a'\in A\setminus \{a\}$. But $y'$ is adjacent to at least one of $b_1$ and $b_2$, call that adjacent vertex $b'$. 

Note that, by the way we have chosen $b'$, for every $j \in \{2,\ldots,n-1001+k\}$, we can select $b'$ so that there exists a path of order $j$ between $b$ and $b'$. Moreover, since $\overline{G}[A]$ is panconnected, for any  $a,a'\in A$ there exists an $a-a'$ path of every order from 3 to $n-1001-k$. Therefore we can obtain a cycle $C_m=a\,y\,b\,\ldots b'\,y'\,a'\,\ldots a$ in $\overline{G}$ for every $n+1\leq m\leq 2n-2000$ by choosing paths of suitable order from $a$ to $a'$ in $\overline{G}[A]$ and $b$ to $b'$ in $\overline{G}[B]$, which is a contradiction. Therefore $\overline{G}[X']$ in not disconnected. 

\textbf{Case 3.1(b).} Assume that $\kappa(\overline{G}[X'])=1$, that is, there exists a cut vertex, say $w$. Therefore, we can assume $X'\setminus \{w\}=A\sqcup B$, where
\begin{eqnarray*}
     n-2001 \leq &|A|&\leq n-1002\\
    n-1001\leq &|B|&\leq n-2,
\end{eqnarray*}
Again, we consider 
\begin{eqnarray*}
    |A|&=&n-1002-k,\\
    |B|&=&n-1001+k, \;\;\;\; \text{ for some } 0\leq k \leq 999.
\end{eqnarray*}

Following the same arguments as in the previous case, we have that $\overline{G}[A]$ is panconnected and $\overline{G}[B]$ is Hamiltonian for $n \geq 4507$. Now consider $w\sim a$ and $w\sim b$ for some $a\in A$ and $b\in B$. Similar to the previous case, we can show that there exists a vertex $a'\in A\setminus \{a\}$ and we can choose $b'\in B\setminus \{b\}$  such that for every $j \in \{2,\ldots,n-1001+k\}$, there exists a path of order $j$ between $b$ and $b'$ in $\overline{G}[B]$, and $a'\sim y$ and $b'\sim y$, in $\overline{G}$, for some $y\in Y\setminus Y'$ (see \Cref{fig:existenceOfCycle1}). Since $\overline{G}[A]$ is panconnected, therefore, we can obtain a cycle $C_m: w\, b\, \ldots, b'\,y\,a'\,\ldots a\,w$ in $\overline{G}$, for every $m\in \{n+1, 2n-2001\}$, which is a contradiction. Therefore $\kappa(\overline{G}[X'])\neq 1$.
\begin{figure}[!ht]
    \centering
    \begin{tikzpicture}[x=0.8 cm,y=0.8 cm]
	\begin{pgfonlayer}{nodelayer}
		\node [style=none] (0) at (-2, 2) {};
		\node [style=none] (1) at (0.5, 2.75) {};
		\node [style=none] (2) at (1, 0.25) {};
		\node [style=none] (3) at (-1.5, -0.75) {};
		\node [style=none] (8) at (4.5, -2.25) {};
		\node [style=none] (9) at (4.5, -4) {};
		\node [style=none] (10) at (0.5, -2.25) {};
		\node [style=none] (11) at (0.5, -4) {};
		\node [style=none] (13) at (2.5, -4.5) {$Y\setminus Y'$};
		\node [style=none] (14) at (-0.5, 3.4) {$n-1002-k$};
		\node [style=none] (15) at (5.5, 3.75) {$n-1001+k$};
		\node [style=none] (16) at (-3.25, 0.75) {$\overline{G}[A]$};
		\node [style=none] (17) at (8, 1) {$\overline{G}[B]$};
		\node [style=Doty, scale=0.65] (18) at (3.75, 1.5) {};
		\node [style=Doty, scale=0.65] (19) at (3.9, 0.75) {};
		\node [style=Doty, scale=0.65] (20) at (0.5, 1.75) {};
		\node [style=Doty, scale=0.65] (21) at (-1, -0.25) {};
		\node [style=Doty, scale=0.65] (23) at (2.3, -3.5) {};
		\node [style=none] (24) at (4.2, 0.75) {$b'$};
		\node [style=none] (25) at (4.1, 1.5) {$b$};
		\node [style=none] (26) at (-0.5, -0.25) {$a'$};
		\node [style=none] (27) at (0.25, 1.25) {$a$};
		\node [style=none] (28) at (3, -3.5) {$y$};
		\node [style=none] (30) at (-1, 1.5) {};
		\node [style=none] (31) at (-1.75, 1) {};
		\node [style=none] (32) at (5.5, 3.25) {};
		\node [style=none] (33) at (7.25, 1.5) {};
		\node [style=Doty] (35) at (2.25, 3.25) {};
		\node [style=none] (36) at (2.25, 3.75) {$w$};
		\node [style=none] (37) at (5.5, -0.25) {};
	\end{pgfonlayer}
	\begin{pgfonlayer}{edgelayer}
		\draw (8.center) to (10.center);
		\draw (10.center) to (11.center);
		\draw (11.center) to (9.center);
		\draw (9.center) to (8.center);
		\draw [in=75, out=15] (1.center) to (2.center);
		\draw [bend left=45] (2.center) to (3.center);
		\draw [bend left=45, looseness=1.50] (3.center) to (0.center);
		\draw [in=-165, out=60, looseness=1.50] (0.center) to (1.center);
		\draw [line width=1.1pt] (19) to (23);
		\draw [line width=1.1pt] (23) to (21);
		\draw [line width=1.1pt, dashed, in=-90, out=90, looseness=1.25] (21) to (31.center);
		\draw [line width=1.1pt, dashed, in=-120, out=75, looseness=2.00] (31.center) to (30.center);
		\draw [line width=1.1pt, dashed, bend left=60, looseness=1.50] (30.center) to (20);
		\draw [line width=1.1pt](18) to (35);
		\draw [line width=1.1pt](35) to (20);
		\draw [line width=1.1pt, dashed,bend left=315] (32.center) to (18);
		\draw [line width=1.1pt, dashed,bend right=45] (18) to (37.center);
		\draw [line width=1.1pt, dashed,bend right=45] (37.center) to (33.center);
		\draw [line width=1.1pt, dashed,bend right=45] (33.center) to (32.center);
	\end{pgfonlayer}
\end{tikzpicture}

    \caption{Existence of $C_m$ in $\overline{G}$ when $\kappa(\overline{G}[X'])=1$}
    \label{fig:existenceOfCycle1}
\end{figure}

\textbf{Case 3.1(c).} Assume that $\kappa(\overline{G}[X'])=2$, that is, there exist two vertices $w, w'\in X'$ such that they form a cut set for $\overline{G}[X']$. Assume $X'\setminus\{w,w'\}=A\sqcup B$, where
\begin{eqnarray*}
     n-2002 \leq &|A|&\leq n-1002\\
    n-1002\leq &|B|&\leq n-2,
\end{eqnarray*}
Similar to the previous cases, we consider 
\begin{eqnarray*}
    |A|&=&n-1002-k,\\
    |B|&=&n-1002+k, \;\;\;\; \text{ for some } 0\leq k \leq 1000.
\end{eqnarray*}

Again, similar to the previous cases, we have that $\overline{G}[A]$ is panconnected and $\overline{G}[B]$ is Hamiltonian  for $n \geq 4511$. Since $\overline{G}[X']$ is 2-connected, there exist vertices $a,a'\in A$ and $b,b'\in B$ such that $w'$ is adjacent to $a'$ and $b'$, and $w$ is adjacent to $a$ and $b$ (see \Cref{fig:existenceOfCycle2}). We choose such $b$ and $b'$ so that their cyclic distance on a Hamiltonian cycle in $\overline{G}[B]$ is minimum. Let $P_1$ and $P_2$ denote the two $b-b'$ subpaths on that cycle, where $P_1$ is the shorter path and $P_2$ is the longer one.

\textbf{Sub-case 3.1(c)(\rm{I}).} Suppose $4 \le |V(P_1)| \le 2008$. Let $P_3 = wP_2w'$. Then the order of $P_3$ satisfies
\[
n - 3006 + k \le |V(P_3)| \le n - 1002 + k,
\]
and hence we may parametrize it as
\begin{equation}\label{eqn:P3lenth}
|V(P_3)| = n - 3006 + k + s,
\end{equation}
where $s \in [0,2004]$ and $k \in [0,1000]$.

Since $\overline{G}[A]$ is panconnected and has diameter at most $2$, we can find an $a$--$a'$ path $P_4$ in $\overline{G}[A]$ of any prescribed order between $3$ and $n - 1002 - k$. In particular, we choose
\begin{equation}\label{eqn:P4lenth}
|V(P_4)| = (n - 1002 - k) - s - t,
\end{equation}
where $t \in [0, n - 4009]$.

Now joining $P_3$ and $P_4$ using the edges $a'w'$ and $aw$, we obtain a cycle $C_m$ (see \Cref{fig:existenceOfCycle2}$(\mathrm{I})$). By construction,
\[
m = |V(P_3)| + |V(P_4)|,
\]
and thus by choosing appropriate $t$, we have $\overline{G}\supseteq C_m$ for every  $m\in [n+1, 2n - 4008]$, a contradiction.

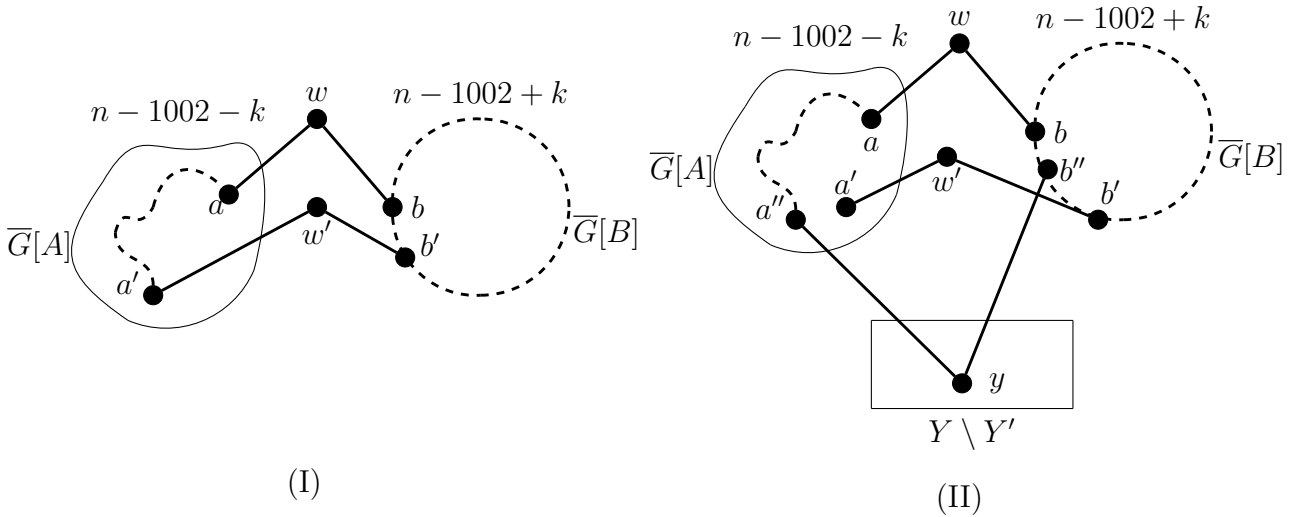
\begin{figure}[!ht]
    \centering
   \begin{tikzpicture}[x=0.67 cm,y=0.67cm]
	\begin{pgfonlayer}{nodelayer}
		\node [style=none] (0) at (-0.5, 0.5) {};
		\node [style=none] (1) at (2, 1.25) {};
		\node [style=none] (2) at (2.5, -1.25) {};
		\node [style=none] (3) at (0, -2.25) {};
		\node [style=none] (14) at (1, 1.9) {$n-1002-k$};
		\node [style=none] (15) at (7, 2.25) {$n-1002+k$};
		\node [style=none] (16) at (-1.75, -0.75) {$\overline{G}[A]$};
		\node [style=none] (17) at (9.5, -0.5) {$\overline{G}[B]$};
		\node [style=Doty, scale=0.65] (18) at (5.25, 0) {};
		\node [style=Doty, scale=0.65] (19) at (5.5, -1) {};
		\node [style=Doty, scale=0.65] (20) at (2, 0.25) {};
		\node [style=Doty, scale=0.65] (21) at (0.5, -1.75) {};
		\node [style=none] (24) at (6, -0.75) {$b'$};
		\node [style=none] (25) at (5.75, 0) {$b$};
		\node [style=none] (26) at (0, -1.5) {$a'$};
		\node [style=none] (27) at (1.75, 0) {$a$};
		\node [style=none] (30) at (0.5, 0) {};
		\node [style=none] (31) at (-0.25, -0.5) {};
		\node [style=none] (32) at (7, 1.75) {};
		\node [style=none] (33) at (8.75, 0) {};
		\node [style=Doty, scale=0.65] (35) at (3.75, 1.75) {};
		\node [style=none] (36) at (3.75, 2.25) {$w$};
		\node [style=none] (37) at (7, -1.75) {};
		\node [style=none] (38) at (12.25, 2) {};
		\node [style=none] (39) at (14.75, 2.75) {};
		\node [style=none] (40) at (15.25, 0.25) {};
		\node [style=none] (41) at (12.75, -0.75) {};
		\node [style=none] (42) at (18.75, -2.25) {};
		\node [style=none] (43) at (18.75, -4) {};
		\node [style=none] (44) at (14.75, -2.25) {};
		\node [style=none] (45) at (14.75, -4) {};
		\node [style=none] (46) at (16.75, -4.5) {$Y\setminus Y'$};
		\node [style=none] (47) at (13.75, 3.4) {$n-1002-k$};
		\node [style=none] (48) at (19.75, 3.75) {$n-1002+k$};
		\node [style=none] (49) at (11, 0.75) {$\overline{G}[A]$};
		\node [style=none] (50) at (22.3, 1) {$\overline{G}[B]$};
		\node [style=Doty, scale=0.65] (51) at (18, 1.5) {};
		\node [style=Doty, scale=0.65] (52) at (18.25, 0.75) {};
		\node [style=Doty, scale=0.65] (53) at (14.75, 1.75) {};
		\node [style=Doty, scale=0.65] (54) at (13.25, -0.25) {};
		\node [style=Doty, scale=0.65] (55) at (16.55, -3.5) {};
		\node [style=none] (56) at (18.75, 0.75) {$b''$};
        \node [style=none] (78) at (19.5, 0.3) {$b'$};
		\node [style=none] (57) at (18.5, 1.5) {$b$};
		\node [style=none] (58) at (14.25, 0.5) {$a'$};
		\node [style=none] (59) at (14.75, 1.25) {$a$};
		\node [style=none] (60) at (17.25, -3.5) {$y$};
		\node [style=none] (61) at (13.25, 1.5) {};
		\node [style=none] (62) at (12.5, 1) {};
		\node [style=none] (63) at (19.75, 3.25) {};
		\node [style=none] (64) at (21.5, 1.5) {};
		\node [style=Doty, scale=0.65] (65) at (16.5, 3.25) {};
		\node [style=none] (66) at (16.5, 3.75) {$w$};
		\node [style=none] (67) at (19.75, -0.25) {};
		\node [style=Doty, scale=0.65] (68) at (3.75, 0) {};
		\node [style=none] (69) at (3.75, -0.5) {};
		\node [style=none] (70) at (3.75, -0.5) {$w'$};
		\node [style=Doty, scale=0.65] (71) at (16.25, 1) {};
		\node [style=Doty, scale=0.65] (72) at (19.25, -0.25) {};
		\node [style=Doty, scale=0.65] (73) at (14.25, 0) {};
		\node [style=none] (74) at (3.5, -5.5) {$(\mathrm{I})$};
		\node [style=none] (75) at (16.5, -5.8) {$(\mathrm{II})$};
		\node [style=none] (76) at (16.25, 0.56) {$w'$};
		\node [style=none] (77) at (12.75, 0) {$a''$};
	\end{pgfonlayer}
	\begin{pgfonlayer}{edgelayer}
		\draw [in=75, out=15] (1.center) to (2.center);
		\draw [bend left=45] (2.center) to (3.center);
		\draw [bend left=45, looseness=1.50] (3.center) to (0.center);
		\draw [in=-165, out=60, looseness=1.50] (0.center) to (1.center);
		\draw [line width=1.1pt, dashed, in=-90, out=90, looseness=1.25] (21) to (31.center);
		\draw [line width=1.1pt, dashed, in=-120, out=75, looseness=2.00] (31.center) to (30.center);
		\draw [line width=1.1pt, dashed, bend left=60, looseness=1.50] (30.center) to (20);
		\draw [line width=1.1pt](18) to (35);
		\draw [line width=1.1pt](35) to (20);
		\draw [line width=1.1pt, dashed,bend left=315] (32.center) to (18);
		\draw [line width=1.1pt, dashed,bend right=45] (18) to (37.center);
		\draw [line width=1.1pt, dashed,bend right=45] (37.center) to (33.center);
		\draw [line width=1.1pt, dashed,bend right=45] (33.center) to (32.center);
		\draw (42.center) to (44.center);
		\draw (44.center) to (45.center);
		\draw (45.center) to (43.center);
		\draw (43.center) to (42.center);
		\draw [in=75, out=15] (39.center) to (40.center);
		\draw [bend left=45] (40.center) to (41.center);
		\draw [bend left=45, looseness=1.50] (41.center) to (38.center);
		\draw [in=-165, out=60, looseness=1.50] (38.center) to (39.center);
		\draw [line width=1.1pt] (52) to (55);
		\draw [line width=1.1pt] (55) to (54);
		\draw [line width=1.1pt, dashed, in=-90, out=90, looseness=1.25] (54) to (62.center);
		\draw [line width=1.1pt, dashed, in=-120, out=75, looseness=2.00] (62.center) to (61.center);
		\draw [line width=1.1pt, dashed, bend left=60, looseness=1.50] (61.center) to (53);
		\draw [line width=1.1pt](51) to (65);
		\draw [line width=1.1pt](65) to (53);
		\draw [line width=1.1pt, dashed,bend left=315] (63.center) to (51);
		\draw [line width=1.1pt, dashed,bend right=45] (51) to (67.center);
		\draw [line width=1.1pt, dashed,bend right=45] (67.center) to (64.center);
		\draw [line width=1.1pt, dashed,bend right=45] (64.center) to (63.center);
		\draw [line width=1.1pt](19) to (68);
		\draw [line width=1.1pt](68) to (21);
		\draw [line width=1.1pt](71) to (72);
		\draw [line width=1.1pt](71) to (73);
	\end{pgfonlayer}
\end{tikzpicture}

    \caption{Existence of $C_m$ in $\overline{G}$ when $\kappa(\overline{G}[X'])=2$}
    \label{fig:existenceOfCycle2}
\end{figure}

\textbf{Sub-case 3.1(c)(\rm{II}).} Now suppose $|V(P_1)|\geq 2009$. Let $b_1$ and $b_2$ be two vertices on the path $P_1$ such that $b_1$ has distance three from $b$ and $b_2$ has distance  three from $b'$ along the Hamiltonian cycle. Clearly, both the $b_1$ and $b_2$ are non-adjacent to both the $w$ and $w'$ in $\overline{G}$. Note that 
\begin{equation}
    |\{N_{\overline{G}}(b_1)\cup N_{\overline{G}}(b_2)\}\cap \{Y\setminus Y'\}|\geq 1002-k,
\end{equation}
otherwise, $|N_G(b_1)\cap N_G(b_2)\cap Y\setminus Y'|\geq 1000+k$, and this implies $|N_G(b_1)\cap N_G(b_2)|\geq 1000+k + |A|+|\{w,w'\}|\geq n$, a contradiction.

Now, set $Z=\{N_{\overline{G}}(b_1)\cup N_{\overline{G}}(b_2)\}\cap \{Y\setminus Y'\}$. We claim that there exists a vertex $a''\in A \setminus \{a,a'\}$ such that $a''\sim y$, for some $y\in Z$. If not, for every $a_1,a_2\in A\setminus \{a,a'\}$, we have 
$ |N_G(a_1)\cap N_G(a_2)|\geq |B|+|Z|\geq n$, again a contradiction. Therefore there exists a vertex $a''\in A \setminus \{a,a'\}$ such that $a''\sim y$, for some $y\in Z$. Note that $y$ is adjacent to at least one of $b_1$ or $b_2$. Without loss of generality, we assume $y\sim b_1$ and call $b_1$ as $b''$ (see \Cref{fig:existenceOfCycle2}$(\mathrm{II})$).

Now we treat $a''$, $b''$, and $y$ as playing the roles of $a'$, $b'$, and $w'$, respectively, as in {Sub-case 3.1(c)(\rm{I})}. Proceeding as in {Sub-case 3.1(c)(\rm{I})}, we obtain a contradiction.

\textbf{Sub-case 3.1(c)(\rm{III}).} Consider the case $|V(P_1)| \in \{2,3\}$. Let $b_1$ and $b_2$ be the vertices on $P_2$ at distance three from $b$ and $b'$, respectively, along the Hamiltonian cycle. If either $b_1$ or $b_2$ is adjacent to $w$ or $w'$, denote such a vertex by $b''$. Depending on whether $b''$ is adjacent to $w$ or $w'$, we treat it as $b$ or $b'$, respectively, and proceed as in {Sub-case 3.1(c)(\rm{I})}. Otherwise, if neither $b_1$ nor $b_2$ is adjacent to $w$ or $w'$, we proceed as in {Sub-case 3.1(c)(\rm{II})}, followed by {Sub-case 3.1(c)(\rm{I})} and get a contradiction. Therefore $\kappa(\overline{G}[X'])\neq 2$.  This completes the proof of \Cref{sub:claim:2a:thm1}.

Since $G\nsupseteq K_{2,n}$, for any $x_1,x_2\in X'$, we have
\begin{equation}\label{Eqn:n-2001}
    |N_{\overline{G}[X']}(x_1) \cup N_{\overline{G}[X']}(x_2)|\geq 2n-2002-2-n+1=n-2003.
\end{equation}
Since $\overline{G}[X']$ is $3$-connected, thus by the \Cref{Thm:circumference_3connectedGraphs} and \Cref{Eqn:n-2001}, we have 
\begin{equation}\label{Eqn:3n/2-4002}
    c(\overline{G}[X'])\geq \frac{3}{2}(n-2003).
\end{equation}
Applying \Cref{lem:cycle_lemma} to $\overline{G}[X']$, which is $2$-connected and satisfies \Cref{Eqn:n-2001,Eqn:3n/2-4002}, we obtain
\begin{equation}
    c(\overline{G}[X']) \geq 2n - 4008.
\end{equation}

 Now we show that $\overline{G}[X']$ is not bipartite. Suppose $\overline{G}[X']$ is bipartite with bipartition $X'=A\sqcup B$. Also, we know that 
$$\delta(\overline{G}[X'])\geq \bigg\lfloor \frac{n}{2}\bigg\rfloor -250\geq \frac{n+1}{3}+4\geq \frac{\max\{|A|,|B|\}}{3}+4,$$ for all  $n\geq 1530$. By \Cref{Thm:Bi-pancyclic_Hu}, $\overline{G}[X']$ is weakly bi-pancyclic graph of girth 4. Since  $c(\overline{G}[X'])\geq 2n-4008$, therefore $\overline{G}[X']\supseteq C_m$ for all even $m\in \{n+1,\ldots, 2n-4008\}$, which is a contradiction. 

Now, we have $\overline{G}[X']$ is a $2$-connected, non-bipartite graph on $2n-2002$ vertices with the  circumference $c(\overline{G}[X'])\geq 2n-4008$ such that 
$$\delta(\overline{G}[X'])\geq \frac{|X'|}{4}+250.$$
Therefore by \Cref{Thm:Weakpan-Brandt_nby4_250}, $\overline{G}[X']\supseteq C_m$ for all $m\in \{n+1,\ldots, 2n-4008\}$, again a contradiction. This completes the proof of our result.

\end{proof}

The proof of the following theorem shares several ideas with that of the \Cref{thm:main_result11}, but also requires a number of additional arguments. For this reason, we provide a separate and shorter proof, highlighting only the key differences.

\begin{theorem}\label{thm:main_result2}
    For even $n\geq 4516$, we have $$R(K_{2,n},C_n)=2n-1.$$
\end{theorem}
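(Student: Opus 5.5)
We prove the two bounds separately. For the lower bound we need a graph on $2n-2$ vertices containing no $K_{2,n}$ whose complement contains no $C_n$. Take $G=K_{n-1}\sqcup K_{n-1}$; its complement is $K_{n-1,n-1}$. Every vertex has degree $n-2$ in $G$, so $G\nsupseteq K_{1,n}$ and hence $G\nsupseteq K_{2,n}$. This construction gives no obstruction here, since $K_{n-1,n-1}$ does contain $C_n$ when $n$ is even. So we use the standard extremal graph for $R(K_{1,n},C_n)\ge 2n-1$ with $n$ even, namely $G=K_{n-1}\sqcup K_{n-1}$. Its complement $\overline{G}=K_{n-1,n-1}$ has longest cycle of length $2n-2$ and does contain $C_n$, so this fails. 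The correct construction instead takes $\overline{G}=K_{n-1}\sqcup K_{n-1}$, that is, $G=K_{n-1,n-1}$. Every cycle of $\overline{G}$ then has length at most $n-1<n$, so $\overline{G}\nsupseteq C_n$. Moreover, any two vertices of $G$ have at most $n-1$ common neighbours, so $G\nsupseteq K_{2,n}$. This gives $R(K_{2,n},C_n)\ge 2n-1$.

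For the upper bound, suppose $G$ has $2n-1$ vertices, $G\nsupseteq K_{2,n}$ and $\overline{G}\nsupseteq C_n$. Then $|(N_{\overline{G}}(u)\cup N_{\overline{G}}(v))\setminus\{u,v\}|\ge n-2$ for all $u,v$. We follow the two-case structure of the proof of \Cref{thm:main_result11}.

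\textbf{Case 1: $\delta(\overline{G})\ge n/2+250$.} A smallest component or block of $\overline{G}$, together with the rest of the graph, would give two vertices with $n$ common non-neighbours, so $\overline{G}$ is $2$-connected. By \Cref{Dirac}, $c(\overline{G})\ge n+500\ge (2n-1)-(n-2)$, and then \Cref{lem:cycle_lemma} gives $c(\overline{G})\ge 2n-6\ge n$. If $\overline{G}$ is bipartite, \Cref{Thm:Bi-pancyclic_Hu} gives $C_n$, since $n$ is even. Otherwise \Cref{Thm:Weakpan-Brandt_nby4_250} gives $C_n$, because $n\ge 6$ lies strictly between the girth bound $4$ and the circumference, and $n\ne 5$ in the odd-girth-$7$ exception. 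Either way we reach a contradiction.

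\textbf{Case 2: $\delta(\overline{G})\le n/2+249$.} Some vertex $v$ has $\deg_G(v)\ge 3n/2-251$. As before, choose $X\subseteq N_G(v)$ and add $Y'\subseteq Y$ to obtain $X'$ with $|X'|=2n-c$ for a suitable constant $c$, so that $\delta(\overline{G}[X'])\ge n/2-251\ge |X'|/4+250$. We then repeat the proof of \Cref{sub:claim:2a:thm1} to show $\overline{G}[X']$ is $3$-connected. The cut-set cases (disconnected, cut vertex, $2$-cut) are handled exactly as there, using the constant number of leftover vertices in $Y\setminus Y'$ to link a panconnected side $A$ (via \Cref{Thm:panconnected}) and a Hamiltonian side $B$ (via \Cref{Thm:Nash-william}), and these links produce cycles of every length in a range containing $n$. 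We need only the single length $n$, which lies comfortably inside $[\,|B|+5,\ |A|+|B|\,]$. The delicate point is that with $2n-1$ vertices the side sizes shift by one and the slack in the common non-neighbour counts shrinks by one. This is why the threshold rises to $n\ge 4516$, and the accounting of $|Y\setminus Y'|$, $k$, and $|A|,|B|$ must be redone. With $3$-connectivity established, \Cref{Thm:circumference_3connectedGraphs} gives $c(\overline{G}[X'])\ge \tfrac32(n-c')$, and \Cref{lem:cycle_lemma} upgrades this to $c\ge 2n-c''\ge n$. Bipartiteness is again excluded by \Cref{Thm:Bi-pancyclic_Hu}, and in the non-bipartite case \Cref{Thm:Weakpan-Brandt_nby4_250} yields $C_n$, a contradiction.

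The main obstacle is the $3$-connectivity claim in Case 2, in particular the $2$-cut subcase. There the path $b$--$b'$ in $B$ may be forced to be long, and we must still hit length exactly $n$ using panconnectivity of $A$. Since $|A|\approx n-1000$, choosing an $a$--$a'$ path of order between $3$ and $|A|$ together with the long or short arc of $B$ always reaches $n$. The rerouting through $Y\setminus Y'$, as in Sub-case 3.1(c)(II), is what guarantees an arc of $B$ short enough.
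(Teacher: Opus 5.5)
Your route is the paper's: the same lower-bound graph $G=K_{n-1,n-1}$ (delete the false start with $K_{n-1}\sqcup K_{n-1}$), the same split at $\delta(\overline{G})\approx n/2+250$, and in Case 2 the same set $X'$ of size $2n-2004$, the same $3$-connectivity claim, and the same Wei--Hu--Brandt endgame. However, Case 1 has a genuine gap. With $2n-1$ vertices, $2$-connectivity of $\overline{G}$ does \emph{not} follow from $K_{2,n}$-freeness. If $w$ is a cut vertex with sides $A,B$, then $|A|+|B|=2n-2$. In the balanced case $|A|=|B|=n-1$, two vertices of $A$ are only guaranteed the $n-1$ common non-neighbours in $B$, because $w$ may be adjacent to them.

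This configuration is compatible with everything your counting uses. Take $\overline{G}=K_1\vee(K_{n-1}\sqcup K_{n-1})$, i.e.\ $G=K_{n-1,n-1}\sqcup K_1$: then $\delta(\overline{G})=n-1$, $G\nsupseteq K_{2,n}$, and $\overline{G}$ has a cut vertex. So the contradiction must come from $\overline{G}\nsupseteq C_n$, which is what the paper uses. Two $\overline{G}$-non-neighbours of $w$ in $A$ would have $B\cup\{w\}$ as $n$ common $G$-neighbours. Likewise, two $G$-neighbours $a_1,a_2\in A$ of some $a\in A$ would have $B\cup\{a\}$ as $n$ common $G$-neighbours. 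Hence $\overline{G}[A\cup\{w\}]$ has $n$ vertices and minimum degree at least $n-3\ge n/2$, so it is Hamiltonian by \Cref{Thm:pancyclic-Bondy1971}, giving $C_n$. Without this step, \Cref{Dirac}, \Cref{lem:cycle_lemma} and \Cref{Thm:Weakpan-Brandt_nby4_250} cannot be applied. Your use of \Cref{lem:cycle_lemma} in Case 1 is otherwise harmless but unnecessary, since $c(\overline{G})\ge n+500$ already suffices.

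Case 2 defers entirely to the $3$-connectivity claim in the proof of \Cref{thm:main_result11}, and two of your concrete assertions there are wrong.
\begin{enumerate}
\item The interval $[\,|B|+5,|A|+|B|\,]$ need not contain $n$, since $|B|$ can be as large as $n-2$ in the paper's parametrization.
\item In the $2$-cut case, suppose $b,b'$ are consecutive on the Hamiltonian cycle of $\overline{G}[B]$ and $|B|=n-2$. Then the short arc gives cycles of length at most $|A|+4<n$, and the long arc gives cycles of length at least $|B|+3>n$. So ``the long or short arc always reaches $n$'' fails.
\end{enumerate}
The paper supplies specific devices here that you do not. In the cut-vertex case it picks $b_1,b_2\in N_G(v)$ at distance at least four from $b$, so that $v$ is an extra common $G$-neighbour and the unit of slack lost with $2n-1$ vertices is recovered. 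In the $2$-cut case it replaces an overlong arc by a path $b'\ldots b''\,b$ through a $\overline{G}[B]$-neighbour $b''$ of $b$ (or of $b'$). Rerouting through $Y\setminus Y'$ could substitute for the latter, but Sub-case 3.1(c)(II) reroutes to make the $B$-arc \emph{longer}. You would instead need $b''$ near the middle of the cycle and a fresh count for $Z$.

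Finally, the circumference step does not reach the stated range, in your version or in the paper's. Brandt's condition together with $\delta(\overline{G}[X'])\ge n/2-251$ forces $|X'|\le 2n-2004$. So the best available value in Wei's lemma is $s=n-2005$, giving $c(\overline{G}[X'])\ge\tfrac32(n-2005)$, which is at least $n$ only for $n\ge 6015$. \Cref{lem:cycle_lemma} cannot close this, because its hypothesis $c\ge |X'|-(n-2005)=n+1$ is precisely what is missing. The paper's displayed inequality $\tfrac32(n-2005)>n$ has the same defect, so as written this route covers only $n\ge 6016$.
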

\begin{proof}
For the lower bound, consider the graph $G=K_{n-1,n-1}$. Clearly, $G\nsupseteq K_{2,n}$ and $\overline{G}\nsupseteq C_n$, and therefore $R(K_{2,n},C_n)\geq 2n-1$.
    Let $G$ be a graph on $2n-1$ vertices such that $G\nsupseteq K_{2,n}$ and $\overline{G}\nsupseteq C_n$. Similar to the proof of \Cref{thm:main_result11}, we, again, divide the proof into two cases.

\textbf{Case 1. }$\delta(\overline{G})\geq \frac{n}{2}+250$.

First we prove that $\overline{G}$ is $2$-connected. If not, assume that $\kappa(\overline{G})=0$, therefore we can partition $V(\overline{G})$ as $V(\overline{G})= A \sqcup B$, where $A$ is the set of vertices of the smallest connected component of $\overline{G}$. Since $|A|\geq \delta(\overline{G})+1>2$ and $|B|\geq n$, therefore we get a $K_{2,n}$ in $G$, a contradiction. Therefore $\kappa(\overline{G})\neq 0$.

     Now assume $\kappa(\overline{G})= 1$. Let $w$ be a cut vertex and $V(\overline{G})\setminus \{w\}=A \sqcup B$. If  $|B|\geq n$ then $G\supseteq K_{2,n}$, therefore the only possible case is $|A|=n-1$ and $|B|=n-1$. Note that $w$ is adjacent to at least $n-2$ vertices in both $A$ and $B$ in $\overline{G}$, otherwise we can find two vertices either from $A$ or $B$, say $A$, not adjacent to $w$ and $n-1$ vertices in $B$, thus they have $n$ common non-neighbor in $\overline{G}$, a contradiction.  Note that any vertex in $A$ (similarly in $B$) can not have two non-neighbors in $\overline{G}[A]$ (similarly in $\overline{G}[B]$) otherwise $G\supseteq K_{2,n}$, therefore $\delta(\overline{G}[A])\geq n-3$ and $\delta(\overline{G}[B])\geq n-3$. By \Cref{Thm:pancyclic-Bondy1971}, both $\overline{G}[A\sqcup \{w\}]$ and $\overline{G}[B\sqcup \{w\}]$ are pancyclic. This implies $\overline{G}$ contains $C_n$, a contradiction. Therefore $\kappa(\overline{G})\neq 1$.

 Since $\overline{G}$ is $2$-connected and $\delta(\overline{G})\geq n/2+250> {|V(\overline{G})|}/{4}+250$, from \Cref{Dirac}, we have that
\begin{equation}\label{eqn:circumference_G_bar}
c(\overline{G})\geq n+500 >n.    
\end{equation}
 Similar to the proof of the previous theorem, we can assume $\overline{G}$ is non-bipartite. Using the $2$-connectedness and \Cref{eqn:circumference_G_bar}, we conclude from \Cref{Thm:Weakpan-Brandt_nby4_250} that $\overline{G}$ contains a $C_{n}$, a contradiction. Hence this case is not possible.

 \textbf{Case 2. }$\delta(\overline{G})\leq \frac{n}{2}+249$.

In this case, we have $\Delta(G)\geq 3n/2-251$. So we can choose a vertex $v$ such that $|N_G(v)|\geq 3n/2-251$ and take  $X\subseteq N_G(v)$ such that $|X|=3n/2-251$. Let $Y:=V(G)\setminus (X\cup \{v\})$.
Note that for all $x\in X$, we have $|N_{G}(x)\cap X|\leq n-1$, this implies 
\begin{equation}\label{eqn:x-degree}
    |N_{\overline{G}}(x)\cap X|\geq n/2-251\;\;\;\;\text{for all }x\in X.
\end{equation}
 Similarly 
 \begin{equation}\label{eqn:y-degree}
     |N_{\overline{G}}(y)\cap X|\geq n/2-250 \;\;\;\;\text{for all }y\in Y.
 \end{equation}
 Choose $Y'\subseteq Y$ such that $|Y'|=n/2-1753$ and take $X'=X\sqcup Y'$. Now consider the graph $\overline{G}[X']$ on $2n-2004$ vertices. By \Cref{eqn:x-degree} and \Cref{eqn:y-degree}, we have 
 \begin{equation}\label{eqn:min_degree_Comp_G[X']_thm2}
     \delta(\overline{G}[X'])\geq \frac{n}{2}-251=\frac{|X'|}{4}+250.
 \end{equation}

\begin{claim}\label{sub:claim:2a:thm2}
     $\overline{G}[X']$ is $3$-connected.
 \end{claim}
\emph{Proof of $\Cref{sub:claim:2a:thm2}$.} \textbf{Case 4.1(a).} If not, let $\overline{G}[X']$ is disconnected. Due to a similar reason used in the \Cref{sub:claim:2a:thm1}, $\overline{G}[X']$ can not have more than two connected components and if   $X'=A\sqcup B$ with $|A|\leq |B|$ then we have
\begin{eqnarray*}
    |A|&=&n-1002-k,\\
    |B|&=&n-1002+k, \;\;\;\; \text{ for some } 0\leq k \leq 1000.
\end{eqnarray*}
Again, using similar arguments as in the \Cref{sub:claim:2a:thm1}, we can conclude that $\overline{G}[A]$ is panconnected and $\overline{G}[B]$ is $2$-connected and consequently Hamiltonian for $n\geq 4508$. Again similar to the \Cref{sub:claim:2a:thm1}, there exist $y, y' \in Y \setminus Y'$ such that $y$ is adjacent to $a$ and $b$, and $y'$ is adjacent to $a'$ and $b'$ for $a,a'\in A$ and $b,b'\in B$. Choosing $b$ and $b'$ appropriately and arguing as before, we obtain a cycle $b \ldots b' y' a' \ldots a y b$ of order $n$ in $\overline{G}$, a contradiction.

\textbf{Case 4.1(b).} Now assume that $\kappa(\overline{G}[X'])=1$, that is, there exists a cut vertex, say $w$. Therefore, we can assume $X'\setminus \{w\}=A\sqcup B$, where
\begin{eqnarray*}
    |A|&=&n-1003-k,\\
    |B|&=&n-1002+k, \;\;\;\; \text{ for some } 0\leq k \leq 1000.
\end{eqnarray*}
Again, similar to the \Cref{sub:claim:2a:thm1}, $\overline{G}[A]$ is panconnected and $\overline{G}[B]$ is Hamiltonian for $n\geq 4512$. Consider $w\sim a$ and $w\sim b$ for some $a\in A, b\in B$. Since $v$ has at least $3n/2-251-|A|$ non-neighbors in $\overline{G}[B]$, hence, we can choose $b_1,b_2\in B\setminus \{b\}$ such that both $b_1, b_2$ are non-adjacent to $v$ in $\overline{G}$ and both $b_1,b_2$ are at distance at least four from $b$ on the Hamiltonian cycle of $\overline{G}[B]$. Therefore, 
\begin{equation*}
    |\{N_{\overline{G}}(b_1)\cup N_{\overline{G}}(b_2)\}\cap \{Y\setminus Y'\}|\geq 1001-k.
\end{equation*}
Set $Z=\{N_{\overline{G}}(b_1)\cup N_{\overline{G}}(b_2)\}\cap \{Y\setminus Y'\}$. In this case, we claim that there exists a vertex $a'\in A\setminus \{a\}$ such that $a'\sim y$ for some $y\in Z$. Otherwise there exists $a_1,a_2\in A\setminus \{a\}$ such that 
\begin{equation*}
    |N_G(a_1)\cap N_G(a_2)|\geq |B|+|Z|+|\{v\}|\geq n.
\end{equation*}
  Without loss of generality, we can assume that $y\sim a'$ and $y\sim b'$, where $b'=b_1$. Now, by choosing the larger path $b-b'$ on the Hamiltonian cycle in $\overline{G}[B]$ and an appropriate path $a-a'$ in $\overline{G}[A]$, we can obtain a cycle $w\,b\,\ldots b'\,y\,a'\,\ldots\, a\,w$ of order $n$ in $\overline{G}$, a contradiction.

\textbf{Case 4.1(c).} Assume that $\kappa(\overline{G}[X'])=2$, that is, there exist two vertices $w, w'\in X'$ such that they form a cut set for $\overline{G}[X']$. Assume $X'\setminus\{w,w'\}=A\sqcup B$, where
\begin{eqnarray*}
    |A|&=&n-1003-k,\\
    |B|&=&n-1003+k, \;\;\;\; \text{ for some } 0\leq k \leq 1001.
\end{eqnarray*}

Further, similar to the \Cref{sub:claim:2a:thm1}, we have that $\overline{G}[A]$ is panconnected and $\overline{G}[B]$ is Hamiltonian for $n\geq 4516$. Since $\overline{G}[X']$ is 2-connected, there exist vertices $a,a'\in A$ and $b,b'\in B$ such that $w'$ is adjacent to $a'$ and $b'$, and $w$ is adjacent to $a$ and $b$. Now, if $b$ and $b'$ have distance at least four along a Hamiltonian cycle in $\overline{G}[B]$, then we obtain a cycle of length $n$ as in the previous case. Otherwise, we choose a vertex $b''\sim b$ or $b''\sim b'$  in $\overline{G}[B]$ such that the path $b' \ldots b''\,b$ or $b \ldots b''\,b'$ has length at least $n/2$ and at most $n-5$, respectively, since $|N_{\overline{G}[B]}(b)\cup N_{\overline{G}[B]}(b')|\geq n-2005$. Therefore, using this path and a path $a-a'$ of suitable order in $\overline{G}[A]$, we can obtain a cycle of order $n$ in $\overline{G}$ as in the previous case, a contradiction. 

This completes the proof of \Cref{sub:claim:2a:thm2}.

Note that $|N_{\overline{G}[X']}(x_1)\cup N_{\overline{G}[X']}(x_2)|\geq n-2005$, for every $x_1,x_2\in X'$. Since $\overline{G}[X']$ is $3$-connected, by   \Cref{Thm:circumference_3connectedGraphs} we have
\begin{equation}\label{eqn:circumference_GX'_thm2}
    c(\overline{G}[X'])\geq \frac{3}{2}(n-2005)>n.
\end{equation}

Suppose $\overline{G}[X']$ is bipartite with bipartition $X'=A\sqcup B$. Also, we know that 
$$\delta(\overline{G}[X'])\geq \frac{n}{2}-251\geq \frac{n+1}{3}+4\geq \frac{\max\{|A|,|B|\}}{3}+4.$$
 By \Cref{Thm:Bi-pancyclic_Hu}, $\overline{G}[X']$ is weakly bi-pancyclic graph of girth 4. Since  $c(\overline{G}[X'])>n$, consequently $\overline{G}[X']\supseteq C_n$, which is a contradiction. Therefore $\overline{G}[X']$ is non-bipartite.

From \Cref{eqn:min_degree_Comp_G[X']_thm2,eqn:circumference_GX'_thm2}, we have $\overline{G}[X']$ is a $2$-connected, non-bipartite graph on $2n-2004$ vertices with circumference strictly greater than $n$ such that 
\begin{equation}\label{eqn:G_bar_min_deg}
    \delta(\overline{G}[X'])\geq \frac{|X'|}{4}+250,
\end{equation}
therefore by \Cref{Thm:Weakpan-Brandt_nby4_250}, $\overline{G}[X']$ is weakly pancyclic graph and consequently $\overline{G}[X']\supseteq C_n$, a contradiction. This completes the proof.
\end{proof}

\begin{acknowledgement}
This research work has no associated data. The work of the author Abisek Dewan is supported by University Grants Commission, India (Beneficiary Code/Flag: BWBDA00147662 U). The author Sayan Gupta thanks NISER Bhubaneswar and Homi Bhabha National Institute (HBNI), Mumbai for funding his PhD fellowship. The work of the author Rajiv Mishra is supported by Council of Scientific $\And$ Industrial Research, India(File number: 09/921(0347)/2021-EMR-I).  
\end{acknowledgement}

\bibliographystyle{siam}
	\bibliography{goodbib}
\end{document}